\newcommand{\Z}{\mathbb{Z}}
\newcommand{\B}{\mathcal{B}}
\newcommand{\gb}{\Gamma_2\mathcal{B}}
\newcommand{\la}{\langle}
\newcommand{\ra}{\rangle}
\newcommand{\g}{\Gamma_2}
\newcommand{\hg}{\widehat{\Gamma}}
\newcommand{\aaa}{(E_{12})_{v_1}}
\newcommand{\aab}{(E_{13})_{v_1}}
\newcommand{\aac}{(E_{23})_{v_1}}
\newcommand{\aad}{(E_{32})_{v_1}}
\newcommand{\aae}{(F_{2})_{v_1}}
\newcommand{\aaf}{(F_{3})_{v_1}}
\newcommand{\bba}{(E_{21})_{v_2}}
\newcommand{\bbb}{(E_{23})_{v_2}}
\newcommand{\bbc}{(E_{13})_{v_2}}
\newcommand{\bbd}{(E_{31})_{v_2}}
\newcommand{\bbe}{(F_{1})_{v_2}}
\newcommand{\bbf}{(F_{3})_{v_2}}
\newcommand{\cca}{(E_{31})_{v_3}}
\newcommand{\ccb}{(E_{32})_{v_3}}
\newcommand{\ccc}{(E_{12})_{v_3}}
\newcommand{\ccd}{(E_{21})_{v_3}}
\newcommand{\cce}{(F_{1})_{v_3}}
\newcommand{\ccf}{(F_{2})_{v_3}}
\newcommand{\dda}{(E_{21}F_2E_{12}F_1)_{v_4}}
\newcommand{\ddb}{(E_{13}E_{23})_{v_4}}
\newcommand{\ddc}{(E_{23})_{v_4}}
\newcommand{\ddd}{(E_{31}^{-1}E_{32})_{v_4}}
\newcommand{\dde}{(E_{21}F_{2})_{v_4}}
\newcommand{\ddf}{(F_{3})_{v_4}}
\newcommand{\eea}{(E_{31}F_3E_{13}F_1)_{v_5}}
\newcommand{\eeb}{(E_{12}E_{32})_{v_5}}
\newcommand{\eec}{(E_{32})_{v_5}}
\newcommand{\eed}{(E_{21}^{-1}E_{23})_{v_5}}
\newcommand{\eee}{(E_{31}F_{3})_{v_5}}
\newcommand{\eef}{(F_{2})_{v_5}}
\newcommand{\ffa}{(E_{32}F_3E_{23}F_2)_{v_6}}
\newcommand{\ffb}{(E_{21}E_{31})_{v_6}}
\newcommand{\ffc}{(E_{31})_{v_6}}
\newcommand{\ffd}{(E_{12}^{-1}E_{13})_{v_6}}
\newcommand{\ffe}{(E_{32}F_{3})_{v_6}}
\newcommand{\fff}{(F_{1})_{v_6}}
\newcommand{\gga}{(E_{21}F_2E_{12}F_1E_{31}^{-1}E_{32})_{v_7}}
\newcommand{\ggb}{(E_{31}F_3E_{13}F_1E_{21}^{-1}E_{23})_{v_7}}
\newcommand{\ggc}{(E_{21}^{-1}E_{23})_{v_7}}
\newcommand{\ggd}{(E_{31}^{-1}E_{32})_{v_7}}
\newcommand{\gge}{(E_{21}F_{2})_{v_7}}
\newcommand{\ggf}{(E_{31}F_{3})_{v_7}}
\newcommand{\xaaa}{E_{12}}
\newcommand{\xaab}{E_{13}}
\newcommand{\xaac}{E_{23}}
\newcommand{\xaad}{E_{32}}
\newcommand{\xaae}{F_{2}}
\newcommand{\xaaf}{F_{3}}
\newcommand{\xbba}{E_{21}}
\newcommand{\xbbd}{E_{31}}
\newcommand{\xbbe}{F_{1}}
\newcommand{\xij}{E_{12}}
\newcommand{\xik}{E_{13}}
\newcommand{\xjk}{E_{23}}
\newcommand{\xkj}{E_{32}}
\newcommand{\xxj}{F_{2}}
\newcommand{\xxk}{F_{3}}
\newcommand{\xji}{E_{21}}
\newcommand{\xki}{E_{31}}
\newcommand{\eij}{E_{1j}}
\newcommand{\eji}{E_{j1}}
\newcommand{\eik}{E_{1k}}
\newcommand{\eki}{E_{k1}}
\newcommand{\ejk}{E_{jk}}
\newcommand{\ekj}{E_{kj}}
\newcommand{\ffi}{F_{1}}
\newcommand{\ffj}{F_{j}}
\newcommand{\ffk}{F_{k}}
\DeclareMathOperator{\st}{\mathrm{star}}
\DeclareMathOperator{\lk}{\mathrm{link}}
\DeclareMathOperator{\R}{\mathrm{Rank}}
\newtheorem{thm}{Theorem}[section]
\newtheorem{prop}[thm]{Proposition}
\newtheorem{lem}[thm]{Lemma}
\newtheorem{cor}[thm]{Corollary}
\theoremstyle{example}
\newtheorem{rem}[thm]{Remark}
\address{Department of Mathematics \endgraf 
Faculty of Science and Technology \endgraf 
Tokyo University of Science \endgraf 
Noda, Chiba, 278-8510, Japan}
\email{kobayashi\_ryoma@ma.noda.tus.ac.jp}
\begin{document}

\title[A finite presentation of $\g(n)$]{A finite presentation of the level $2$ principal congruence subgroup of $GL(n;\Z)$}
\author[R. Kobayashi]{Ryoma Kobayashi}
\thanks{2010 {\it Mathematics Subject Classification.}
        57M07, 20F05, 20F65}
\thanks{{\it Key words and phrases.} congruence subgroup, presentation}
\maketitle

\begin{abstract}
It is known that the level $2$ principal congruence subgroup of
 $GL(n;\Z)$ has a finite generating set (see \cite{mp}). 
In this paper, we give a finite presentation of the level $2$ principal
 congruence subgroup of $GL(n;\Z)$.
\end{abstract}

\section{Introduction}\label{intro}

For $n\geq1$, let $\g(n)=\ker(GL(n;\Z)\to{}GL(n;\Z_2))$.
We call $\g(n)$ the {\it level $2$ principal congruence subgroup} of
$GL(n;\Z)$.
Note that for $A\in\g(n)$ the diagonal entries of $A$ are odd and the
others are even.

For $1\leq{}i,j\leq{}n$ with $i\neq{}j$, let $E_{ij}$ denote the matrix
whose $(i,j)$ entry is $2$, diagonal entries are $1$ and others are $0$,
and let $F_i$ denote the matrix whose $(i,i)$ entry is $-1$, other
diagonal entries are $1$ and others are $0$.
It is known that $\g(n)$ is generated by $E_{ij}$ and $F_i$ for
$1\leq{}i,j\leq{}n$ with $i\neq{}j$ (see \cite{mp}).

In this paper, we give a finite presentation of $\g(n)$.

\begin{thm}\label{thm}
For $n\geq 1$,
$\g(n)$ has a finite presentation with generators $E_{ij}$ and $F_i$,
 for $1\leq{}i,j\leq{}n$ with $i\neq{}j$, and with the following
 relators
\begin{enumerate}
 \item $F_i^2$,
 \item $(E_{ij}F_i)^2$,
       $(E_{ij}F_j)^2$,
       $(F_iF_j)^2$
       (when $n\geq2$),
 \item \begin{enumerate}
	\item $[E_{ij},E_{ik}]$,
	      $[E_{ij},E_{kj}]$,
	      $[E_{ij},F_k]$,
	      $[E_{ij},E_{ki}]E_{kj}^2$ 
	      (when $n\geq3$),
	\item $[E_{ji}F_jE_{ij}F_iE_{ki}^{-1}E_{kj},E_{ki}F_kE_{ik}F_iE_{ji}^{-1}E_{jk}]$
	      for $i<j<k$ (when $n\geq3$),
       \end{enumerate}
 \item $[E_{ij},E_{kl}]$ (when $n\geq4$),
\end{enumerate}
where $[X,Y]=X^{-1}Y^{-1}XY$ and $1\leq{i,j,k,l}\leq{n}$ are  mutually
 different.
\end{thm}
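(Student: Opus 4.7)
The plan is to prove the theorem by introducing the group $G(n)$ abstractly defined by the listed generators and relators, constructing a surjective homomorphism $\varphi\colon G(n)\twoheadrightarrow\Gamma_2(n)$, and then establishing that $\varphi$ is injective. To obtain the surjection I would first verify, by direct matrix computation, that each of the listed relators (1)--(4) is trivial in $\Gamma_2(n)$; combined with the generating set from \cite{mp}, this gives the well-defined surjection, and the real content is injectivity.

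The argument proceeds by induction on $n$. For $n=1$ the presentation is $\langle F_1\mid F_1^2\rangle\cong\mathbb{Z}/2$, matching $\Gamma_2(1)$. For $n=2$ I would identify the group defined by relators (1) and (2) with the known structure of $\Gamma_2(2)$ (essentially a free product of cyclic groups) via a short direct argument. The case $n=3$ is the substantive base case, since here the three-index relators (3a) and (3b) first appear. I would handle it geometrically: let $\Gamma_2(3)$ act on a suitable simply connected 2-complex, and read off a presentation from the stabilizer data and 2-cell boundaries, via the standard lemma for groups acting on simply connected complexes. The seven vertex labels $v_1,\ldots,v_7$ foreshadowed in the macro definitions strongly suggest a fundamental domain with seven vertex orbits; the decorated words such as $(E_{21}F_2E_{12}F_1)_{v_4}$ and $(E_{31}F_3E_{13}F_1E_{21}^{-1}E_{23})_{v_7}$ should then arise as explicit elements of vertex stabilizers or edge-generators, and relator (3b) should emerge as the boundary of a specific 2-cell over $v_7$.

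For the inductive step $n\to n+1$, I would realize $\Gamma_2(n)$ inside $\Gamma_2(n+1)$ as the stabilizer of the $(n+1)$-st standard basis vector and analyze the resulting action on cosets (or on the $\Gamma_2(n+1)$-orbit of $e_{n+1}$) by a Reidemeister--Schreier style argument. Relator (3a) should play a decisive role in the subsequent Tietze reduction, being used repeatedly to rewrite conjugates of the form $E_{ij}^{\pm 1}E_{kl}E_{ij}^{\mp 1}$ back into products of standard $E_{ab}$'s, so that the Reidemeister--Schreier output collapses to precisely the new instances of (3a), (3b), and (4) involving the index $n+1$.

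I expect the principal obstacle to lie in this final collapse step: verifying that the complicated six-letter commutator (3b), together with (3a), really absorbs every three-index relation produced by the induction, and in particular that no truly new higher-index relation needs to be added for $n\geq 4$. This will require a careful choice of Schreier transversal and patient cancellation, and I anticipate that it, rather than the geometric $n=3$ base case, will constitute the bulk of the technical work. A useful consistency check throughout will be that the abelianization of $G(n)$ matches that of $\Gamma_2(n)$, which should already pin down the coefficient structure appearing in (3a) and rule out ``phantom'' missing relations.
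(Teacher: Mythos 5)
Your treatment of $n\le 3$ is essentially the paper's: the case $n=3$ is indeed handled by letting $\g(3)$ act on a simply connected complex (the Day--Putman complex $\B_3(\Z)$ of partial bases, whose quotient $\B_3(\Z_2)$ has exactly the seven vertices $v_1,\dots,v_7$ you anticipate) and reading off a presentation from Brown's theorem; for $n=2$ the paper uses Reidemeister--Schreier on $\g(2)\le GL(2;\Z)$ with a six-element transversal rather than a direct structural identification, but your alternative is plausible if you actually carry it out (be careful: the group presented by relators (1)--(2) is generated by involutions but is not literally a free product of cyclic groups, so ``essentially'' needs substance).

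The genuine gap is in your inductive step. First, a Reidemeister--Schreier argument runs in the wrong direction: it derives a presentation of a subgroup from one of the ambient group, whereas you need to present $\g(n+1)$ from data attached to the stabilizer of $e_{n+1}$. To do that you must exhibit a connected, \emph{simply connected} complex on which $\g(n+1)$ acts with controlled stabilizers, and proving simple connectivity is the real content for $n\ge 4$: the paper introduces the subcomplex $\gb_n(\Z)\subset\B_n(\Z)$ of partial bases extendable to matrices in $\g(n)$ itself, and devotes an entire section (Proposition~\ref{gbnz}, via a norm-reduction argument on loops) to showing it is simply connected when $n\ge4$. Your plan contains no substitute for this; note also that the naive candidate --- the orbit of $e_{n+1}$ alone --- carries no edges in this complex, since two columns both congruent to $e_{n+1}$ mod $2$ cannot lie in a common matrix of $\g(n+1)$, so all $n+1$ vertex orbits are needed. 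Second, the stabilizer of $e_{n+1}$ is not $\g(n)$ but an extension $0\to\Z^{n}\to\g(n+1)_{e_{n+1}}\to\g(n)\to1$ (Lemma~\ref{ses}); presenting this extension is precisely where the new instances of relators (3a) and (4) arise, and your sketch does not engage with it. Finally, you mislocate the difficulty: once $\gb_{n+1}(\Z)$ is known to be simply connected, the quotient is a single $n$-simplex, the edge relators merely identify generators across vertex stabilizers, and the ``collapse'' is routine; the heavy Tietze work lives in the $n=3$ base case (the paper's appendix), not in the induction.
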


We note that a finite presentation of $\g(n)$ has been obtained also by
Fullarton \cite{f} and Margalit-Putman.

It is clear that the above theorem is valid in the case $n=1$.
A proof of the theorem is by induction on $n$.
In Section~\ref{n=2}, we will prove the case $n=2$ of Theorem~\ref{thm},
using the Reidemeister-Schreier method.
In Section~\ref{n=3}, we will prove the case $n=3$ of Theorem~\ref{thm},
considering a simply connected simplicial complex on which $\g(n)$ acts.
In Section~\ref{gb}, we will introduce another simply connected
simplicial complex on which $\g(n)$ acts for $n\geq4$.
Finally, in Section~\ref{proof}, we will obtain the presentation of
Theorem~\ref{thm}, by this action and induction on $n$.

We now explain about an application of Theorem~\ref{thm}.
For $g\geq1$, let $N_g$ denote a non-orientable closed surface of genus
$g$, that is, $N_g$ is a connected sum of $g$ real projective planes.
Let $\cdot:H_1(N_g;R)\times{}H_1(N_g;R)\to\Z_2$ denote the mod $2$
intersection form, and let ${\rm{}Aut}(H_1(N_g;R),\cdot)$ denote the
group of automorphisms over $H_1(N_g;R)$ preserving the mod $2$
intersection form $\cdot$, where $R=\Z$ or $\Z_2$.
Consider the natural epimorphism 
$$\Phi_g:{\rm{}Aut}(H_1(N_g;\Z),\cdot)\to{\rm{}Aut}(H_1(N_g;\Z_2),\cdot).$$
McCarthy and Pinkall \cite{mp} showed that $\g(g-1)$ is isomorphic
to $\ker\Phi_g$.

We denote by $\mathcal{M}(N_g)$ the group of isotopy classes of
diffeomorphisms over $N_g$. 
The group $\mathcal{M}(N_g)$ is called the {\it mapping class group} of
$N_g$. 
In \cite{mp} and \cite{gp}, it is shown that the natural homomorphism 
$\mathcal{M}(N_g)\to{\rm{}Aut}(H_1(N_g;R),\cdot)$ is surjective, where
$R=\Z$ or $\Z_2$.
Let $\mathcal{I}(N_g)$ denote the kernel of 
$\mathcal{M}(N_g)\to{\rm{}Aut}(H_1(N_g;\Z),\cdot)$.
We say $\mathcal{I}(N_g)$ the {\it Torelli group} of $N_g$.
In \cite{hk}, Hirose and the author obtained a generating set of
$\mathcal{I}(N_g)$ for $g\geq4$, using Theorem~\ref{thm}.

\section{Preliminaries}\label{pre}

In this section, we explain about some facts for presentations of
groups.

\subsection{Basics on presentations of groups}\

Let $G_1,G_2$ and $G_3$ be groups with a short exact sequence
$$1\to{}G_1\stackrel{\phi}{\to}G_2\stackrel{\pi}{\to}G_3\to1.$$
If $G_1$ and $G_3$ are presented then we can obtain a presentation of
$G_2$.
In particular, if $G_1$ and $G_3$ are finitely presented then $G_2$ can
be finitely presented.

More precisely, a presentation of $G_2$ is obtained as follows.
Let $G_1=\la{}X_1\mid{}R_1\ra$ and $G_3=\la{}X_3\mid{}R_3\ra$.
For each $x\in{}X_3$, we choose $\widetilde{x}\in\pi^{-1}(x)$.
We put
$X_2=\{\phi(x_1),\widetilde{x_3}\mid{}x_1\in{}X_1,x_3\in{}X_3\}$.
For
$r=a_1^{\varepsilon_1}a_2^{\varepsilon_2}\cdots{}a_k^{\varepsilon_k}\in{}R_3$,
let
$\widetilde{r}=\widetilde{a_1}^{\varepsilon_1}\widetilde{a_2}^{\varepsilon_2}\cdots\widetilde{a_k}^{\varepsilon_k}$.
For $g\in\ker\pi$, let $\overline{g}$ be a word over $\phi(X_1)$ with
$g=\overline{g}$.
Let
$A=\{\phi(r_1)\mid{}r_1\in{}R_1\}$,
$B=\{\widetilde{r_3}\overline{\widetilde{r_3}}^{-1}\mid{}r_3\in{}R_3\}$
and
$C=\{\widetilde{x_3}\phi(x_1)\widetilde{x_3}^{-1}\overline{\widetilde{x_3}\phi(x_1)\widetilde{x_3}^{-1}}^{-1}\mid{}x_1\in{}X_1,x_3\in{}X_3\}$.
We put $R_2=A\cup{}B\cup{}C$.
Then we have $G_2=\la{}X_2\mid{}R_2\ra$.

In addition, if there is a homomorphism $\rho:G_3\to{}G_2$ such that
$\pi\circ\rho=id_{G_3}$, choose $\widetilde{x}=\rho(x)\in\pi(x)^{-1}$
for $x\in{}X_1$.
Then, we have the relation $\widetilde{r}=1$ in $G_2$ for $r\in{}R_3$.

If $G_2$ is presented then we can examine a presentation of $G_1$, by
the Reidemeister-Schreier method.
In particular, if $G_3$ is a finite group, that is, the index of
${\rm{}Im}\phi$ is finite, and $G_2$ can be finitely presented, then
$G_1$ can be finitely presented.

For further information see \cite{j}.

\subsection{Presentations of groups acting on a simplicial complex}\label{brown}\

Let $X$ be a simplicial complex, and let $G$ be a group acting on $X$ by
isomorphisms as a simplicial map.
We suppose that the action of $G$ on $X$ is {\it without rotation}, that
is, for a simplex $\Delta\in{}X$ and $g\in{}G$, if $g(\Delta)=\Delta$
then $g(v)=v$ for all vertices $v\in\Delta$.
For a simplex $\Delta\in{}X$, let $G_{\Delta}$ be the stabilizer of
$\Delta$.
For $k\geq0$, the {\it $k$-skeleton} $X^{(k)}$ is the subcomplex of $X$
consisting of all simplices of dimension at most $k$.

Consider a homomorphism $\Phi:\underset{v\in X^{(0)}}{\ast}G_v\to{}G$.
For $g\in{}G$, if $g$ stabilizes a vertex $w\in{}X^{(0)}$, we denote $g$
by $g_w$ as an element in $G_w<\underset{v\in{}X^{(0)}}{\ast}G_v$.
For a $1$-simplex $\{v,w\}\in{}X$ and $g\in{}G_{v}\cap{}G_{w}$, we have
$g_vg_w^{-1}\in\ker\Phi$ and call $g_vg_w^{-1}$ the {\it edge relator}.

At first, for any $1$-simplex $\{v,w\}$, choose an orientation such that
orientations are preserved by the action of $G$.
Namely, orientations of $\{v,w\}$ and $g\{v,w\}$ are compatible for
all $g\in{}G$.
We denote the oriented $1$-simplex $\{v,w\}$ by $(v,w)$.
Similarly, choose orders of $2$-simplices, and denote the ordered
$2$-simplex $\{v_1,v_2,v_3\}$ by $(v_1,v_2,v_3)$.
For an oriented $1$-simplex $e=(v,w)$, let $o(e)=v$ and $t(e)=w$.
For an oriented $2$-simplex $\tau=(v_1,v_2,v_3)$, we call $v_1$ the base
point of $\tau$.

Next, choose an oriented tree $T$ of $X$ such that a set of vertices of
$T$ is a set of representative elements for vertices of the orbit space
$G\backslash{}X$.
Let $V$ denote the set of vertices of $T$.
In addition, choose a set $E$ of representative elements for oriented
$1$-simplices of $G\backslash{}X$ such that $o(e)\in{}V$ for $e\in{}E$
and $1$-simplices of $T$ is in $E$, and a set $F$ of representative
elements for ordered $2$-simplices of $G\backslash{}X$ such that the
base point of $\tau$ is in $V$ for $\tau\in{}F$.
For $e\in{}E$, let $w(e)$ denote the element in $V$ which is equivalent
to $t(e)$ by the action of $G$, and choose $g_e\in{}G$ such that
$g_e(w(e))=t(e)$ and $g_e=1$ if $e\in{}T$.

For a $1$-simplex $\{v,w\}$ with $v\in{}V$, note that
$\{v,w\}=\{o(e),hg_ew(e)\}$ or $\{w(e),hg_e^{-1}o(e)\}$ for some
$e\in{}E$ and $h\in{}G_v$.
Then we define respectively $g_{\{v,w\}}=hg_e$ or $hg_e^{-1}$.
Let $\alpha$ be a loop in $X$ starting at a vertex of $V$.
We denote
$\alpha=\{v_i,\{v_i,v_{i+1}\}\mid1\leq{}i\leq{}k,v_{k+1}=v_1\}$.
Note that $v_1,g_1^{-1}v_2\in{}V$, where $g_1=g_{\{v_1,v_2\}}$.
For $2\leq{}i\leq{}k$, define
$g_i=g_{g_{i-1}^{-1}\cdots{}g_1^{-1}\{v_i,v_{i+1}\}}$, inductively. 
Note that for $2\leq{}i\leq{}k$, there exists an oriented $1$-simplex
$e_i$ such that $o(e_i)\in{}V$ and
$\{v_i,v_{i+1}\}=g_1g_2\cdots{}g_{i-1}\{o(e_i),t(e_i)\}$.
Let $g_{\alpha}=g_1g_2\cdots{}g_k$.
We have $g_{\alpha}(v_1)=v_1$, that is, $g_{\alpha}\in{}G_{v_1}$.

For $e\in{}E$, put a word $\hat{g}_e$.
For a $1$-simplex $\{v,w\}$ with $v\in{}V$, let
$\hat{g}_{\{v,w\}}=h\hat{g}_e$ or $h\hat{g}_e^{-1}$ if
$g_{\{v,w\}}=hg_e$ or $hg_e^{-1}$, respectively.
For a loop $\alpha$ in $X$ starting at a vertex of $V$, let 
$\hat{g}_{\alpha}=\hat{g}_1\hat{g}_2\cdots\hat{g}_k$ if
$g_{\alpha}=g_1g_2\cdots{}g_k$.
Note that we can define $g_{\tau}$ and $\hat{g}_{\tau}$ for
$\tau\in{}F$, regarding $\tau$ as a loop in $X$.
Let
$\widehat{G}=\left(\underset{v\in{}V}{\ast}G_v\right)\ast\left(\underset{e\in{}E}{\ast}\la\hat{g}_e\ra\right)$.

The following theorem is a special case of the result of Brown
\cite{b}.

\begin{thm}[\cite{b}]\label{Brown}
Let $X$ be a simply connected simplicial complex, and let $G$ be a group
 acting without rotation on $X$ by isomorphisms as a simplicial map.
Then $G$ is isomorphic to the quotient of $\widehat{G}$ by the normal
 subgroup generated by followings
\begin{enumerate}
 \item $\hat{g}_e$, where $e\in{}T$,
 \item $\hat{g}_e^{-1}A_{o(e)}\hat{g}_e(g_e^{-1}Ag_e)_{w(e)}^{-1}$,
       where $e\in{}E$ and $A\in{}G_e$,
 \item $\hat{g}_{\tau}g_{\tau}^{-1}$, where $\tau\in{}F$.
\end{enumerate}
\end{thm}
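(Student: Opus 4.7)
The plan is to construct a surjective homomorphism $\widehat{G}\to G$ that kills the three families of relators, and to show that its kernel is precisely the normal subgroup they generate, so that the induced quotient map is an isomorphism. The homomorphism itself is essentially forced: send each factor $G_v$ to its inclusion in $G$ and send each free generator $\hat{g}_e$ to the chosen element $g_e\in G$. Checking that the relators (1), (2), (3) die in $G$ is routine from the setup: (1) because $g_e=1$ by convention when $e\in T$; (2) because conjugation by $g_e$ is exactly the transport of $G_e\le G_{o(e)}$ into $G_{w(e)}$; and (3) because the holonomy $g_\tau$ of the boundary of a $2$-simplex $\tau$ is what $\hat{g}_\tau$ is already designed to evaluate to in $G$.

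For surjectivity I would argue that $G$ is generated by the vertex stabilizers $G_v$ for $v\in V$ together with the translations $g_e$ for $e\in E$. Given $g\in G$, pick a vertex $v_0\in V$ and an edge path in $X$ from $v_0$ to $g(v_0)$; pushing this path back to $T$ edge by edge, each non-tree step contributes a factor $g_e^{\pm1}$ after premultiplying by an element of the current vertex stabilizer, and the final return to $v_0$ leaves behind an element of $G_{v_0}$. This is exactly the construction of $g_\alpha$ applied to a path lifting, and it expresses $g$ as a word in the generators of $\widehat{G}$.

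For injectivity I would invoke simple connectedness of $X$ as the essential input. Given a word $w\in\widehat{G}$ that maps to $1$ in $G$, I would reinterpret $w$ as a closed edge loop $\alpha$ in $X$ based at a vertex of $V$, using the same transport procedure that defines $g_\alpha$: vertex-stabilizer syllables act at the current vertex, while each $\hat{g}_e^{\pm1}$ advances along a lift of $e$. Since $X$ is simply connected, $\alpha$ bounds a simplicial disk $D\subset X$; choose a triangulation of $D$ whose $2$-faces are translates of simplices $\tau\in F$. Expressing $\alpha$ as the concatenation of the boundaries of these $2$-faces, conjugated along paths in the $1$-skeleton, and repeatedly applying relator (3) reduces $w$ to a product of conjugates of relators of types (1) and (2), which is what we wanted.

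The hard part will be the last step: making the ``edge-path reading'' of an arbitrary word in $\widehat{G}$ precise and showing that a simplicial disk filling of $\alpha$ really can be compiled into a product of relators of type (3), modulo the edge relators (2) and the tree relators (1). This is the technical heart of Brown's theorem and involves careful bookkeeping of base points, of the spanning tree $T$, and of the orientation conventions fixed in the setup; in particular one must verify that $g_\alpha$ and $\hat{g}_\alpha$ depend only on the homotopy class of $\alpha$ relative to these conventions, so that trivializing $\alpha$ by triangles really does trivialize the corresponding element of $\widehat{G}$ modulo the listed relators.
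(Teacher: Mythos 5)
The paper offers no proof of this statement: it is quoted verbatim as a special case of Brown's theorem and the reference \cite{b} is the proof. So there is nothing in the paper to compare your argument against line by line; what follows assesses your sketch on its own terms.

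Your outline is the standard strategy for proving Brown's theorem directly (rather than via Brown's original machinery of constructing an auxiliary $K(G,1)$-type complex from the presentation data and comparing it with $X$ by a $G$-map). The easy parts are correct and complete as stated: the homomorphism $\widehat{G}\to G$ is the obvious one, the three relator families visibly die in $G$, and surjectivity follows from connectivity of $X$ by the usual edge-path induction. The genuine gap is exactly where you say it is, and it is not a small one: the injectivity step is the entire content of the theorem, and you have recorded a plan for it rather than an argument. Two specific points would have to be supplied before the plan becomes a proof. First, an arbitrary word in $\widehat{G}$ does not directly read as an edge loop: its vertex-stabilizer syllables may sit at incompatible vertices of $V$, and one must first use relators (1) and (2) to normalize the word into the form $h_1\hat{g}_{e_1}^{\pm1}h_2\hat{g}_{e_2}^{\pm1}\cdots$ whose associated path is well defined; moreover the element $g_{\{v,w\}}=hg_e^{\pm1}$ is determined only up to the edge stabilizer $G_e$, and relator (2) is precisely what makes the resulting element of $\widehat{G}$ independent of that choice. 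Second, the reduction of a null-homotopic loop must be organized combinatorially (e.g.\ by induction on the number of $2$-cells in a simplicial filling, collapsing one boundary triangle at a time), and at each collapse one must check that the triangle being removed is a $G$-translate of some $\tau\in F$ conjugated along a path back to the base point, so that relator (3) applies after transporting by relators (1) and (2). None of this is false, but none of it is done, so as written this is an announcement of Brown's theorem rather than a proof of it --- which, to be fair, is also the paper's own treatment.
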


\section{Proof of the case $n=2$ of Theorem~\ref{thm}}\label{n=2}

In this section, we prove the following proposition.

\begin{prop}\label{g(2)}
$\g(2)$ has a finite presentation with generators 
$E_{12}$,
$E_{21}$,
$F_1$ and $F_2$, and with relators
$F_1^2$,
$F_2^2$,
$(E_{12}F_1)^2$,
$(E_{12}F_2)^2$,
$(E_{21}F_1)^2$,
$(E_{21}F_2)^2$ and $(F_1F_2)^2$.
\end{prop}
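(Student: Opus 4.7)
The plan is to apply the Reidemeister--Schreier rewriting process to the short exact sequence
\[
1 \to \g(2) \to GL(2;\Z) \to GL(2;\Z_2) \to 1.
\]
Since $GL(2;\Z_2)\cong S_3$ has order six, $\g(2)$ sits inside $GL(2;\Z)$ as a subgroup of index six, so a finite presentation of the ambient group immediately yields a finite (though large) presentation of $\g(2)$.

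First I would fix a standard finite presentation of $GL(2;\Z)$ --- for instance the one obtained from the amalgamated-product decomposition of $SL(2;\Z)$ extended by a sign reflection. Then I would choose a Schreier transversal $\{u_1,\ldots,u_6\}$ lifting the six elements of $GL(2;\Z_2)$; a convenient choice is to take short words in the standard elementary and sign matrices that realise the six permutations in $S_3$. The Reidemeister--Schreier algorithm then produces, as generators of $\g(2)$, all elements of the form $u_k\cdot x\cdot \overline{u_k x}^{-1}$ for $x$ running over generators of $GL(2;\Z)$, and as relators, the rewrites of $u_k r u_k^{-1}$ for each defining relator $r$ of $GL(2;\Z)$ and each $k$.

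In parallel, I would verify directly, by matrix multiplication, that each of the seven claimed relators $F_1^2$, $F_2^2$, $(E_{12}F_1)^2$, $(E_{12}F_2)^2$, $(E_{21}F_1)^2$, $(E_{21}F_2)^2$, $(F_1F_2)^2$ actually vanishes in $\g(2)$. The core of the argument is then a Tietze simplification: express every Reidemeister--Schreier generator as a word in $\{E_{12},E_{21},F_1,F_2\}$, and show that each Reidemeister--Schreier relator, so rewritten, is a consequence of the seven claimed relators. After this elimination the presentation collapses to the one in the statement.

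The main obstacle is precisely this last bookkeeping phase. The raw Reidemeister--Schreier output contains twelve candidate generators and a dozen or so conjugated relators, many of them long words; the skill is to choose the transversal with enough foresight that the coset bar-function $\overline{\,\cdot\,}$ is easy to compute and the resulting rewrites collapse quickly. Once the transversal is well chosen, the Tietze reductions become mechanical, and the seven relators claimed in the proposition are exactly what remains after the dust settles.
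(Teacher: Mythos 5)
Your proposal is essentially the paper's own proof: the paper uses the same short exact sequence $1\to\g(2)\to GL(2;\Z)\to GL(2;\Z_2)\to1$, a presentation of $GL(2;\Z)$ with generators $x,y,z$, a six-element Schreier transversal, and the Reidemeister--Schreier rewriting followed by Tietze reductions to the four generators $E_{12},E_{21},F_1,F_2$. The approach is correct; what remains is exactly the computational bookkeeping you identify, which the paper carries out explicitly.
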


\subsection{The Reidemeister Schreier method}\label{R-S}\

Let $x,y$ and $z$ be
$$
\begin{array}{ccc}
x=\left(
\begin{array}{rr}
1&-1\\
0&1\\
\end{array}
\right),&
y=\left(
\begin{array}{rr}
1&0\\
1&1\\
\end{array}
\right),&
z=\left(
\begin{array}{rr}
0&1\\
1&0\\
\end{array}
\right).
\end{array}
$$

At first, we prove the next lemma.

\begin{lem}\label{GL2Z}
$GL(2;\Z)$ has a presentation with
$$GL(2;\Z)=\la{}x,y,z\mid{}xyxy^{-1}x^{-1}y^{-1},(xy)^6,z^2,xzyz\ra.$$
\end{lem}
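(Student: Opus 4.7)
The plan is to derive this presentation of $GL(2;\Z)$ from a known presentation of $SL(2;\Z)$ via the short exact sequence
$$1\to SL(2;\Z)\to GL(2;\Z)\stackrel{\det}{\to}\{\pm 1\}\to 1,$$
applying the general machinery for group extensions recalled in Section~\ref{pre}. Since $\det(z)=-1$ and $z^2=I$, the element $z$ furnishes a splitting of the determinant, so the lifted relators from $\{\pm 1\}$ will come without correction words, leaving only conjugation relators and the relators of $SL(2;\Z)$ itself.

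First, I would establish the auxiliary presentation
$$SL(2;\Z)=\la x,y\mid xyxy^{-1}x^{-1}y^{-1},\ (xy)^6\ra.$$
This is derived from the classical amalgamated-product decomposition $SL(2;\Z)\cong\Z/4\ast_{\Z/2}\Z/6$, which has the natural presentation $\la s,t\mid s^4,t^6,s^2t^{-3}\ra$ with $s^2=t^3=-I$. A short matrix computation verifies that $s=xyx$ and $t=xy$ realize this decomposition, and Tietze transformations substituting out $s$ and $t$ convert the relator $s^2t^{-3}$ into the braid relation $xyx=yxy$ (equivalent to the displayed relator) and $t^6$ into $(xy)^6$; the relator $s^4$ is redundant since $s^4=(s^2)^2=(t^3)^2=t^6$. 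Carrying out these Tietze steps carefully is the main obstacle of the proof.

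Next, I apply the extension method of Section~\ref{pre} with $G_1=SL(2;\Z)$, $G_3=\{\pm 1\}=\la z\mid z^2\ra$, and the splitting $\rho(z)=z$. The resulting presentation of $GL(2;\Z)$ has generators $x,y,z$ and relators consisting of the two relators of $SL(2;\Z)$, the lifted relator $z^2$, and two conjugation relators $zxz^{-1}\overline{zxz^{-1}}^{-1}$ and $zyz^{-1}\overline{zyz^{-1}}^{-1}$. A direct matrix calculation yields $zxz^{-1}=y^{-1}$ and $zyz^{-1}=x^{-1}$; using $z=z^{-1}$, the first conjugation relator simplifies to $xzyz$. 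Inverting the identity $x=zy^{-1}z$ (which is equivalent to $xzyz=1$) gives $x^{-1}=zyz$, so the second conjugation relator $zyzx$ is already a consequence of $xzyz$ together with $z^2$ and may be discarded. This yields exactly the presentation in the statement.
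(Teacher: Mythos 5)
Your proposal is correct and follows essentially the same route as the paper: lift the known presentation of $SL(2;\Z)$ through the split extension $1\to SL(2;\Z)\to GL(2;\Z)\to\{\pm1\}\to1$, with $z$ giving the splitting, and observe that the two conjugation relators $zxz^{-1}y$ and $zyz^{-1}x$ reduce (using $z^2$) to the single relator $xzyz$. The only difference is that you rederive the presentation $\la x,y\mid xyxy^{-1}x^{-1}y^{-1},(xy)^6\ra$ of $SL(2;\Z)$ from the amalgam $\Z/4\ast_{\Z/2}\Z/6$, whereas the paper simply cites Serre for it.
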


\begin{proof}
In \cite{s}, it is known that $SL(2;\Z)$ has a presentation with
$$SL(2;\Z)=\la{}x,y\mid{}xyxy^{-1}x^{-1}y^{-1},(xy)^6\ra.$$
Consider the short exact sequence
$$
1\to{}SL(2;\Z)\to{}GL(2;\Z)\to\{\pm1\}\to1.
$$
Note that $\{\pm1\}=\la\det{}z\mid(\det{}z)^2\ra$.
Then we have that $GL(2;\Z)$ has a presentation with generators $x,y$
 and $z$, and with the following relations
\begin{itemize}
 \item $xyxy^{-1}x^{-1}y^{-1}=1$,
       $(xy)^6=1$,
 \item $z^2=1$,
 \item $zxz^{-1}=y^{-1}$,
       $zyz^{-1}=x^{-1}$.
\end{itemize}
Since $z^2=1$, we have $zxzy=1$ and $zyzx=1$.
Moreover the equation $zxzy=zyzx=1$ is obtained from $xzyz=1$.
Therefore, we obtain the claim.
\end{proof}

Next we consider the short exact sequence
$$
1\to\g(2)\to{}GL(2;\Z)\stackrel{\pi}{\to}GL(2;\Z_2)\to1.
$$
For $0\leq{}i\leq5$, let $a_i\in{}GL(2;\Z)$ be
$$
\begin{array}{ccc}
a_0=\left(
\begin{array}{rr}
1&0\\
0&1\\
\end{array}
\right),&
a_1=\left(
\begin{array}{rr}
1&1\\
0&1\\
\end{array}
\right),&
a_2=\left(
\begin{array}{rr}
1&0\\
1&1\\
\end{array}
\right),\\
a_3=\left(
\begin{array}{rr}
0&1\\
1&0\\
\end{array}
\right),&
a_4=\left(
\begin{array}{rr}
1&1\\
1&0\\
\end{array}
\right),&
a_5=\left(
\begin{array}{rr}
0&1\\
1&1\\
\end{array}
\right),
\end{array}
$$
and let $U=\{a_0,a_1,a_2,a_3,a_4,a_5\}$.
Since each of $a_i$ is denoted by
$a_0=1$,
$a_1=x^{-1}$,
$a_2=y$,
$a_3=z$,
$a_4=x^{-1}z$ and
$a_5=yz$,
as a word over $\{x,y,z\}$, we have that $U$ is a Schreier transversal
for $\g(2)$ in $GL(2;\Z)$ (see \cite{j}).
For $A\in{}GL(2;\Z)$, we define $\overline{A}=a_i$ if
$\pi(A)=\pi(a_i)$.
Let $B$ be the set of matrices $\overline{wa_i}^{-1}wa_i$ with
$wa_i\notin{}U$, where $0\leq{}i\leq5$ and $w=x^{\pm1}$, $y^{\pm1}$ and
$z$.
Then we have 
$$
B=\left\{
\left(
\begin{array}{rr}
1&2\\
0&1\\
\end{array}
\right),
\left(
\begin{array}{rr}
1&-2\\
0&1\\
\end{array}
\right),
\left(
\begin{array}{rr}
1&2\\
0&-1\\
\end{array}
\right),
\left(
\begin{array}{rr}
1&0\\
2&1\\
\end{array}
\right),
\left(
\begin{array}{rr}
1&0\\
-2&1\\
\end{array}
\right),
\left(
\begin{array}{rr}
-1&0\\
2&1\\
\end{array}
\right)
\right\}
$$
(see Table~\ref{wa}).
Note that $B$ is a generating set of $\g(2)$ (see \cite{j}).
It is clear that
$$
\begin{array}{cc}
\left(
\begin{array}{rr}
1&-2\\
0&1\\
\end{array}
\right)
=\left(
\begin{array}{rr}
1&2\\
0&1\\
\end{array}
\right)^{-1},&
\left(
\begin{array}{rr}
1&0\\
-2&1\\
\end{array}
\right)
=\left(
\begin{array}{rr}
1&0\\
2&1\\
\end{array}
\right)^{-1}.
\end{array}
$$
Thus, by Tietze transformations, we obtain the generating set
$B'=\{g_1,g_2,g_3,g_4\}$ of $\g(2)$, where
$$
\begin{array}{cccc}
g_1=\left(
\begin{array}{rr}
1&2\\
0&1\\
\end{array}
\right),&
g_2=\left(
\begin{array}{rr}
1&2\\
0&-1\\
\end{array}
\right),&
g_3=\left(
\begin{array}{rr}
1&0\\
2&1\\
\end{array}
\right),&
g_4=\left(
\begin{array}{rr}
-1&0\\
2&1\\
\end{array}
\right).
\end{array}
$$

\begin{table}[htbp]
\begin{tabular}{|c|c|c|c|c|c|}
\hline
$\overline{wa_i}^{-1}wa_i$
&$w=x$&$w=x^{-1}$&$w=y$&$w=y^{-1}$&$w=z$\\\hline
$i=0$
&
$\left(
\begin{array}{rr}
1&-2\\
0&1\\
\end{array}
\right)$
&
$\left(
\begin{array}{rr}
1&0\\
0&1\\
\end{array}
\right)$
&
$\left(
\begin{array}{rr}
1&0\\
0&1\\
\end{array}
\right)$
&
$\left(
\begin{array}{rr}
1&0\\
-2&1\\
\end{array}
\right)$
&
$\left(
\begin{array}{rr}
1&0\\
0&1\\
\end{array}
\right)$
\\\hline
$i=1$
&
$\left(
\begin{array}{rr}
1&0\\
0&1\\
\end{array}
\right)$
&
$\left(
\begin{array}{rr}
1&2\\
0&1\\
\end{array}
\right)$
&
$\left(
\begin{array}{rr}
1&2\\
0&-1\\
\end{array}
\right)$
&
$\left(
\begin{array}{rr}
-1&0\\
2&1\\
\end{array}
\right)$
&
$\left(
\begin{array}{rr}
1&0\\
0&1\\
\end{array}
\right)$
\\\hline
$i=2$
&
$\left(
\begin{array}{rr}
1&2\\
0&-1\\
\end{array}
\right)$
&
$\left(
\begin{array}{rr}
-1&0\\
2&1\\
\end{array}
\right)$
&
$\left(
\begin{array}{rr}
1&0\\
2&1\\
\end{array}
\right)$
&
$\left(
\begin{array}{rr}
1&0\\
0&1\\
\end{array}
\right)$
&
$\left(
\begin{array}{rr}
1&0\\
0&1\\
\end{array}
\right)$
\\\hline
$i=3$
&
$\left(
\begin{array}{rr}
1&0\\
-2&1\\
\end{array}
\right)$
&
$\left(
\begin{array}{rr}
1&0\\
0&1\\
\end{array}
\right)$
&
$\left(
\begin{array}{rr}
1&0\\
0&1\\
\end{array}
\right)$
&
$\left(
\begin{array}{rr}
1&-2\\
0&1\\
\end{array}
\right)$
&
$\left(
\begin{array}{rr}
1&0\\
0&1\\
\end{array}
\right)$
\\\hline
$i=4$
&
$\left(
\begin{array}{rr}
1&0\\
0&1\\
\end{array}
\right)$
&
$\left(
\begin{array}{rr}
1&0\\
2&1\\
\end{array}
\right)$
&
$\left(
\begin{array}{rr}
-1&0\\
2&1\\
\end{array}
\right)$
&
$\left(
\begin{array}{rr}
1&2\\
0&-1\\
\end{array}
\right)$
&
$\left(
\begin{array}{rr}
1&0\\
0&1\\
\end{array}
\right)$
\\\hline
$i=5$
&
$\left(
\begin{array}{rr}
-1&0\\
2&1\\
\end{array}
\right)$
&
$\left(
\begin{array}{rr}
1&2\\
0&-1\\
\end{array}
\right)$
&
$\left(
\begin{array}{rr}
1&2\\
0&1\\
\end{array}
\right)$
&
$\left(
\begin{array}{rr}
1&0\\
0&1\\
\end{array}
\right)$
&
$\left(
\begin{array}{rr}
1&0\\
0&1\\
\end{array}
\right)$
\\\hline
\end{tabular}
\caption{The matrix $\overline{wa_i}^{-1}wa_i$.}\label{wa}
\end{table} 

We now prove the next lemma.

\begin{lem}\label{r}
Let $r=r_1r_2\cdots{}r_n\in{}GL(2;\Z)$.
Then for $0\leq{}i\leq5$ and $1\leq{}j\leq{}n-1$, we have
$$\overline{r_j\overline{(r_{j+1}\cdots{}r_n)a_i}}=\overline{(r_jr_{j+1}\cdots{}r_n)a_i}.$$
\end{lem}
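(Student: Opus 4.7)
The plan is to reduce the identity to a statement purely about the quotient homomorphism $\pi:GL(2;\Z)\to GL(2;\Z_2)$. The key observation is that, by the definition of the overline map, $\overline{A}$ is the unique element of the Schreier transversal $U$ lying in the same $\g(2)$-coset as $A$; equivalently, $\overline{A}=\overline{B}$ if and only if $\pi(A)=\pi(B)$, and in particular $\pi(\overline{A})=\pi(A)$ for every $A\in GL(2;\Z)$. So to prove that two elements have the same overline, it suffices to check that they have the same image under $\pi$.

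The plan is then as follows. First I would set $s=r_{j+1}\cdots r_n$, so that the identity to be established reads $\overline{r_j\overline{sa_i}}=\overline{r_jsa_i}$. Next, since $\pi$ is a group homomorphism and $\pi(\overline{sa_i})=\pi(sa_i)$, I compute
$$\pi(r_j\overline{sa_i})=\pi(r_j)\pi(\overline{sa_i})=\pi(r_j)\pi(sa_i)=\pi(r_jsa_i)=\pi((r_jr_{j+1}\cdots r_n)a_i).$$
Because the overline of an element is determined by its image under $\pi$, applying the overline to the equal outermost expressions yields exactly the claimed equality.

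There is no real obstacle here: the lemma is a formal consequence of the fact that $U$ is a transversal for $\g(2)$ in $GL(2;\Z)$ together with the multiplicativity of $\pi$. The only thing to be careful about is to unwind the definition of the overline in terms of $\pi$ at the outset; once this is done, the argument is a two-line computation, and the lemma will be used in the Reidemeister--Schreier rewriting in Section~\ref{n=2} to handle longer words inductively.
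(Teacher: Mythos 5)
Your proposal is correct and follows exactly the paper's own argument: both reduce the claim to the observation that $\overline{A}=\overline{B}$ if and only if $\pi(A)=\pi(B)$, and then verify the equality of $\pi$-images using the multiplicativity of $\pi$ and the fact that $\pi(\overline{A})=\pi(A)$. There is nothing to add.
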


\begin{proof}
Note that $\overline{A}=\overline{B}$ if and only if $\pi(A)=\pi(B)$.
We calculate
\begin{eqnarray*}
\pi(r_j\overline{(r_{j+1}\cdots{}r_n)a_i})
&=&
\pi(r_j)\pi(\overline{(r_{j+1}\cdots{}r_n)a_i})\\
&=&
\pi(r_j)\pi((r_{j+1}\cdots{}r_n)a_i)\\
&=&
\pi((r_jr_{j+1}\cdots{}r_n)a_n).
\end{eqnarray*} 
Therefore, we obtain the claim.
\end{proof}

Let $R$ be the set of relators of $GL(2;\Z)$ in Lemma~\ref{GL2Z}.
For any $r=r_1r_2\cdots{}r_n\in{}R$ and $0\leq{}i\leq5$, we define a
word $s_{ri}$ over $B'$ as follows.
$$
s_{ri}=
(a_i^{-1}r_1\overline{(r_2\cdots{}r_n)a_i})
(\overline{(r_2\cdots{}r_n)a_i}^{-1}r_2\overline{(r_3\cdots{}r_n)a_i})
\cdots(\overline{r_na_i}^{-1}r_na_i).
$$
Let $\widehat{S}=\{s_{ri}\mid{}r\in{}R,0\leq{}i\leq5\}$.
Then $\widehat{S}$ is a set of relators of $\g(2)$ (see \cite{j}).
Hence we have $\g(2)=\la{}B'\mid\widehat{S}\ra$.

\subsection{Proof of Proposition \ref{g(2)}}\

We now write all elements in $\widehat{S}$ as a product of elements in
$B'$. 
Let $[w]=\overline{w}^{-1}w$.

For $r=xyxy^{-1}x^{-1}y^{-1}$, we have
\begin{eqnarray*}
s_{r0}
&=&
[xa_1][ya_4][xa_3][y^{-1}a_5][x^{-1}a_2][y^{-1}a_0]\\
&=&
(g_4g_3^{-1})^2,\\
s_{r1}
&=&
[xa_0][ya_2][xa_5][y^{-1}a_3][x^{-1}a_4][y^{-1}a_1]\\
&=&
(g_1^{-1}g_3g_4)^2,\\
s_{r2}
&=&
[xa_5][ya_3][xa_4][y^{-1}a_1][x^{-1}a_0][y^{-1}a_2]\\
&=&
g_4^2,\\
s_{r3}
&=&
[xa_4][ya_1][xa_0][y^{-1}a_2][x^{-1}a_5][y^{-1}a_3]\\
&=&
(g_2g_1^{-1})^2,\\
s_{r4}
&=&
[xa_3][ya_5][xa_2][y^{-1}a_0][x^{-1}a_1][y^{-1}a_4]\\
&=&
(g_3^{-1}g_1g_2)^2,\\
s_{r5}
&=&
[xa_2][ya_0][xa_1][y^{-1}a_4][x^{-1}a_3][y^{-1}a_5]\\
&=&
g_2^2.
\end{eqnarray*} 
For $r=(xy)^6$, we have
\begin{eqnarray*}
s_{r0}
&=&
[xa_1][ya_4][xa_3][ya_5][xa_2][ya_0][xa_1][ya_4][xa_3][ya_5][xa_2][ya_0]\\
&=&
(g_4g_3^{-1}g_1g_2)^2,\\
s_{r1}
&=&
[xa_0][ya_2][xa_5][ya_3][xa_4][ya_1][xa_0][ya_2][xa_5][ya_3][xa_4][ya_1]\\
&=&
(g_1^{-1}g_3g_4g_2)^2,\\
s_{r2}
&=&
[xa_5][ya_3][xa_4][ya_1][xa_0][ya_2][xa_5][ya_3][xa_4][ya_1][xa_0][ya_2]\\
&=&
(g_4g_2g_1^{-1}g_3)^2,\\
s_{r3}
&=&
[xa_4][ya_1][xa_0][ya_2][xa_5][ya_3][xa_4][ya_1][xa_0][ya_2][xa_5][ya_3]\\
&=&
(g_2g_1^{-1}g_3g_4)^2,\\
s_{r4}
&=&
[xa_3][ya_5][xa_2][ya_0][xa_1][ya_4][xa_3][ya_5][xa_2][ya_0][xa_1][ya_4]\\
&=&
(g_3^{-1}g_1g_2g_4)^2,\\
s_{r5}
&=&
[xa_2][ya_0][xa_1][ya_4][xa_3][ya_5][xa_2][ya_0][xa_1][ya_4][xa_3][ya_5]\\
&=&
(g_2g_4g_3^{-1}g_1)^2.
\end{eqnarray*} 
For $r=z^2$ and $0\leq{}i\leq5$, since
$\overline{za_i}^{-1}za_i=
\left(
\begin{array}{rr}
1&0\\
0&1\\
\end{array}
\right)$,
we have $s_{ri}=1$.
For $r=xzyz$, we have
\begin{eqnarray*}
s_{r0}
&=&
[xa_1][za_5][ya_3][za_0]=1,\\
s_{r1}
&=&
[xa_0][za_3][ya_5][za_1]=g_1^{-1}g_1=1,\\
s_{r2}
&=&
[xa_5][za_1][ya_4][za_2]=g_4^2,\\
s_{r3}
&=&
[xa_4][za_2][ya_0][za_3]=1,\\
s_{r4}
&=&
[xa_3][za_0][ya_2][za_4]=g_3^{-1}g_3=1,\\
s_{r5}
&=&
[xa_2][za_4][ya_1][za_5]=g_2^2.
\end{eqnarray*} 

Note that 
$s_{(xy)^60}=s_{(xy)^64}=s_{(xy)^65}$,
$s_{(xy)^61}=s_{(xy)^62}=s_{(xy)^63}$,
up to conjugation, and 
$s_{xzyz2}=s_{xyxy^{-1}x^{-1}y^{-1}2}$,
$s_{xzyz5}=s_{xyxy^{-1}x^{-1}y^{-1}5}$.
Therefore, $\g(2)$ has a presentation with generators $g_1,g_2,g_3,g_4$
and with relators
$(g_4g_3^{-1})^2$,
$(g_1^{-1}g_3g_4)^2$,
$g_4^2$,
$(g_2g_1^{-1})^2$,
$(g_3^{-1}g_1g_2)^2$,
$g_2^2$,
$(g_4g_3^{-1}g_1g_2)^2$ and
$(g_1^{-1}g_3g_4g_2)^2$.

Finally, we put
$E_{12}=g_1$,
$E_{21}=g_3$,
$F_1=g_4g_3^{-1}$ and $F_2=g_2g_1^{-1}$.
Note that 
$g_1=E_{12}$,
$g_2=F_2E_{12}$,
$g_3=E_{21}$ and $g_4=F_1E_{21}$.
By Tietze transformations, we conclude that $\g(2)$ has a finite
presentation with generators
$E_{12}$,
$E_{21}$,
$F_1$ and $F_2$, and with relators
$F_1^2$,
$F_2^2$,
$(E_{12}F_1)^2$,
$(E_{12}F_2)^2$,
$(E_{21}F_1)^2$,
$(E_{21}F_2)^2$ and $(F_1F_2)^2$.

Thus, the proof of Proposition~\ref{g(2)} is completed.
Therefore, Theorem~\ref{thm} is valid when $n=2$.

\section{Proof of the case $n=3$ of Theorem~\ref{thm}}\label{n=3}

In this section, we prove the following proposition.

\begin{prop}\label{g(3)}
$\g(3)$ has a finite presentation with generators
$E_{12}$,
$E_{13}$,
$E_{21}$,
$E_{23}$,
$E_{31}$,
$E_{32}$,
$F_1$,
$F_2$ and 
$F_2$,
and with the following relators
\begin{enumerate}
 \item $F_1^2$,
       $F_2^2$,
       $F_3^2$,
 \item $(E_{12}F_1)^2$,
       $(E_{12}F_2)^2$,
       $(E_{13}F_1)^2$,
       $(E_{13}F_3)^2$,
       $(E_{21}F_2)^2$,
       $(E_{21}F_1)^2$,
       $(E_{23}F_2)^2$,
       $(E_{23}F_3)^2$,
       $(E_{31}F_3)^2$,
       $(E_{31}F_1)^2$,
       $(E_{32}F_3)^2$,
       $(E_{32}F_2)^2$,
       $(F_1F_2)^2$,
       $(F_1F_3)^2$,
       $(F_2F_3)^2$,
 \item \begin{enumerate}
	\item $[E_{12},E_{13}]$,
	      $[E_{21},E_{23}]$,
	      $[E_{31},E_{32}]$,
	      $[E_{21},E_{31}]$,
	      $[E_{12},E_{32}]$,
	      $[E_{13},E_{23}]$,
	      $[E_{12},F_3]$,
	      $[E_{21},F_3]$,
	      $[E_{13},F_2]$,
	      $[E_{31},F_2]$,
	      $[E_{23},F_1]$,
	      $[E_{32},F_1]$,
	      $[E_{32},E_{13}]E_{12}^2$,
	      $[E_{23},E_{12}]E_{13}^2$,
	      $[E_{31},E_{23}]E_{21}^2$,
	      $[E_{13},E_{21}]E_{23}^2$,
	      $[E_{21},E_{32}]E_{31}^2$,
	      $[E_{12},E_{31}]E_{32}^2$,
	\item $[E_{21}F_2E_{12}F_1E_{31}^{-1}E_{32},E_{31}F_3E_{13}F_1E_{21}^{-1}E_{23}]$.
       \end{enumerate}
\end{enumerate}
\end{prop}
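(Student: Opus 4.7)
The strategy is to apply Brown's theorem (Theorem~\ref{Brown}) to an action of $\g(3)$ on a simply connected simplicial complex $X$. A natural candidate has vertex orbits labelled $v_1, \ldots, v_7$, matching the seven vertex stabilizer macros in the preamble; in particular, for $v_1$ the six listed generators $E_{12}, E_{13}, E_{23}, E_{32}, F_2, F_3$ are precisely the elements of $\g(3)$ fixing $e_1 \in \Z^3$, so $\g(3)_{v_1}$ is isomorphic to $\g(2) \ltimes \Z^2$ and is finitely presented by combining Proposition~\ref{g(2)} with the commutators encoding the semidirect product action. I expect each of the other six vertex stabilizers to have an analogous explicit structure, so that finite presentations of all stabilizers are in hand.

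First I would exhibit $X$ and verify that (i) the action is without rotation, (ii) the orbit space has finitely many vertex, edge, and $2$-simplex classes, and (iii) $X$ is simply connected. Then I would choose a spanning tree $T$ on $V = \{v_1, \ldots, v_7\}$ (so that $g_e = 1$ on $T$-edges), a set $E$ of oriented edge representatives, and a set $F$ of $2$-simplex representatives. The edge relators of Theorem~\ref{Brown}(2) identify, for each shared generator $A$, the two copies $A_{o(e)}$ and $A_{w(e)}$; after Tietze transformations these identifications reduce every vertex-indexed symbol $(X)_{v_k}$ to a single global symbol $X \in \{E_{ij}, F_i\}$. The vertex-stabilizer relators then collapse to items (1), (2) of Proposition~\ref{g(3)} together with the ``parabolic'' commutators of (3)(a) of the form $[E_{ij},E_{ik}]$, $[E_{ij},E_{kj}]$, $[E_{ij},F_k]$, and $[E_{ij},E_{ki}]E_{kj}^2$, all of which live inside a single $\g(3)_{v_k}$.

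The $2$-simplex relators of Theorem~\ref{Brown}(3) then produce the only genuinely new relator, namely the long commutator of (3)(b): the arguments $E_{21}F_2E_{12}F_1E_{31}^{-1}E_{32}$ and $E_{31}F_3E_{13}F_1E_{21}^{-1}E_{23}$ appear in the preamble as elements of $\g(3)_{v_7}$ (the macros $\gga$ and $\ggb$), indicating that a single distinguished triangle based at $v_7$ contributes this relator via $\hat{g}_{\tau}g_{\tau}^{-1}$. The main obstacle I foresee is verifying simple connectivity of $X$, since Brown's theorem depends on it and this is typically the deepest step in arguments of this type. A secondary but nontrivial task is the Tietze bookkeeping showing that no other triangle relator survives the reduction: for each orbit of $2$-simplex in $F$ I would have to confirm that the resulting word in $\{E_{ij}, F_i\}$ is already a consequence of items (1), (2), (3)(a), and the single long commutator (3)(b), so that nothing further needs to be added to the presentation.
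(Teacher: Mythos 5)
Your overall architecture matches the paper's: apply Brown's theorem to the action of $\g(3)$ on the complex of partial bases $\B_3(\Z)$, with the seven vertex orbits represented by $v_1=e_1,\dots,v_7=e_1+e_2+e_3$, and present each stabilizer via the split exact sequence $0\to\Z^2\to\g(3)_{e_t}\to\g(2)\to1$ together with Proposition~\ref{g(2)}. Simple connectivity, which you flag as the main obstacle, is in fact dispatched by citing Day--Putman's result that $\B_n(\Z)$ is $(n-2)$-connected; for $n=3$ no new topological work is needed.

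The genuine gap is your account of where the relator (3)(b) comes from. With all seven representatives $v_i$ chosen in a single fundamental domain, every edge representative satisfies $w(e)=t(e)$, so one takes $g_e=1$ for all $e$, hence $g_\tau=1$ for every triangle; the relators of type (3) in Theorem~\ref{Brown} then only say $\hat g_{ij}\hat g_{jk}=\hat g_{ik}$ and, combined with $\hat g_{1i}=1$, merely kill the auxiliary generators $\hat g_{ij}$. No triangle contributes a relator among the $E_{ij},F_i$ at all. The long commutator (3)(b) is instead a relator \emph{internal to the vertex stabilizer} $\g(3)_{v_7}$: that stabilizer is the conjugate of $\g(3)_{v_1}$ by $T_{31}T_{21}$, its generators are the words $E_{21}F_2E_{12}F_1E_{31}^{-1}E_{32}$, $E_{31}F_3E_{13}F_1E_{21}^{-1}E_{23}$, etc.\ (not single letters $E_{ij}$), and (3)(b) is the image of $[E_{12},E_{13}]$ under this conjugation, rewritten via the edge identifications. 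This also exposes a second underestimate in your plan: the stabilizers of $v_4,\dots,v_7$ produce many further relators in these long words (the relators (4.3), (5.3), (6.3) and (7.3)(b),(c) of the paper), and the real bookkeeping is proving that all of them except $[E_{21}F_2E_{12}F_1E_{31}^{-1}E_{32},E_{31}F_3E_{13}F_1E_{21}^{-1}E_{23}]$ are consequences of (1), (2), (3)(a) and that one commutator. As written, your reduction would either omit (3)(b) entirely (since the triangles give nothing) or leave unjustified the claim that the $v_4,\dots,v_7$ stabilizer relators ``collapse'' to items (1), (2), (3)(a).
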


\subsection{Preparation}\

For $R=\Z$ or $\Z_2$, let $\B_n(R)$ denote the simplicial complex whose
$(k-1)$-simplex $\{x_1,x_2,\dots,x_k\}$ is the set of $k$-vectors
$x_i\in{}R^n$ such that $x_1,x_2,\dots,x_k$ are mutually different
column vectors of a matrix $A\in{}GL(n;R)$.
In \cite{dp}, Day and Putman proved that $\B_n(\Z)$ is
$(n-2)$-connected.
Here, a simplicial complex $X$ is $m$-connected if its geometric
realization $|X|$ is $m$-connected.
In addition, $X$ is $-1$-connected if $X$ is nonempty.
Note that there is the natural left action
$\g(n)\times\B_n(\Z)\to\B_n(\Z)$ defined by
$A\{x_1,x_2,\dots,x_k\}=\{Ax_1,Ax_2,\dots,Ax_k\}$ for $A\in\g(n)$ and
$\{x_1,x_2,\dots,x_k\}\in\B_n(\Z)$, and that the action is without
rotation.

In this section, we consider the case $n=3$.
Since $GL(3;\Z_2)$ is the quotient of $GL(3;\Z)$ by $\g(3)$, it follows
that the orbit space $\g(3)\backslash\B_3(\Z)$ is isomorphic to
$\B_3(\Z_2)$.
Let $\varphi:\B_3(\Z)\to\B_3(\Z_2)$ be a natural surjection induced by
the surjection $GL(3;\Z)\twoheadrightarrow{}GL(3;\Z_2)$.

For $1\leq{}i\leq7$, let $v_i$ be
$v_1=e_1$,
$v_2=e_2$,
$v_3=e_3$,
$v_4=e_1+e_2$,
$v_5=e_1+e_3$,
$v_6=e_2+e_3$ and 
$v_7=e_1+e_2+e_3$,
where $e_1$, $e_2$ and $e_3$ are canonical normal vectors in $\Z^3$.
Then, the vertices of $\B_3(\Z_2)$ are $\varphi(v_i)$, the $1$-simplices
are $\varphi(\{v_i,v_j\})$, and the $2$-simplices are
$\varphi(\{v_i,v_j,v_k\})$, where $\{i,j,k\}$ is not 
$\{1,2,4\}$,
$\{1,3,5\}$,
$\{1,6,7\}$,
$\{2,3,6\}$,
$\{2,5,7\}$,
$\{3,4,7\}$ and
$\{4,5,6\}$.
(Note that
$\{v_1,v_2,v_4\}$,
$\{v_1,v_3,v_5\}$,
$\{v_1,v_6,v_7\}$,
$\{v_2,v_3,v_6\}$,
$\{v_2,v_5,v_7\}$,
$\{v_3,v_4,v_7\}$ and
$\{v_4,v_5,v_6\}$
are not $2$-simplices of $\B_3(\Z)$.)

We prove the next lemma.

\begin{lem}\label{g(3)2}
 $\g(3)$ is isomorphic to the quotient of
 $\underset{1\leq{}i\leq7}{\ast}\g(3)_{v_i}$ by the normal subgroup
 generated by edge relators.
\end{lem}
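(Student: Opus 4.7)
The plan is to apply Brown's theorem (Theorem~\ref{Brown}) to the action of $\g(3)$ on $\B_3(\Z)$, which is $1$-connected by Day--Putman \cite{dp}. Since the $\g(3)$-action is without rotation, the hypotheses of Theorem~\ref{Brown} are satisfied.

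I would choose the orbit data as follows. Take $V=\{v_1,\ldots,v_7\}$: this is a complete set of vertex orbit representatives since $\g(3)\backslash\B_3(\Z)\cong\B_3(\Z_2)$ has exactly the seven vertices $\varphi(v_i)$. For $T$, take the star tree with edges $\{v_1,v_j\}$ for $2\leq j\leq 7$ (each such pair is a $1$-simplex of $\B_3(\Z)$). For $E$, take $\{(v_i,v_j):1\leq i<j\leq 7\}$, giving one oriented representative for each edge orbit of $\B_3(\Z_2)$; and for $F$ similarly take the $28$ ordered $2$-simplices $(v_a,v_b,v_c)$ with $a<b<c$ over the non-forbidden triples. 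Crucially, since both endpoints of every $e\in E$ already lie in $V$, one can take $g_e=1$, and consequently $g_\tau=1$ for every $\tau\in F$.

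Brown's theorem then presents $\g(3)$ on the free product $\underset{1\leq i\leq 7}{\ast}\g(3)_{v_i}$ together with auxiliary letters $\hat{g}_e$ for $e\in E\setminus T$. With our choices, type~1 relators reduce to $\hat{g}_e=1$ for $e\in T$; each type~2 relator becomes $\hat{g}_e^{-1}A_{o(e)}\hat{g}_eA_{t(e)}^{-1}$ for $A\in\g(3)_{o(e)}\cap\g(3)_{t(e)}$, which collapses to the edge relator $A_{o(e)}A_{t(e)}^{-1}$ as soon as $\hat{g}_e=1$; and each type~3 relator for $(v_a,v_b,v_c)$ takes the form $\hat{g}_{(v_a,v_b)}\hat{g}_{(v_b,v_c)}\hat{g}_{(v_a,v_c)}^{-1}=1$. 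The task therefore reduces to showing that every $\hat{g}_e$ with $e\in E\setminus T$ is forced to be trivial by the tree relators and the type~3 relators, for then a Tietze transformation eliminates all $\hat{g}_e$ and the type~2 relators become exactly the edge relators asserted by the lemma.

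To carry out the elimination, for each non-tree edge $\{v_i,v_j\}$ with $2\leq i<j$ such that $\{1,i,j\}$ is not one of the seven forbidden triples, the type~3 relator for the $2$-simplex $\{v_1,v_i,v_j\}$ together with $\hat{g}_{(v_1,v_i)}=\hat{g}_{(v_1,v_j)}=1$ forces $\hat{g}_{(v_i,v_j)}=1$. This handles all but the three exceptional edges $\{v_2,v_4\},\{v_3,v_5\},\{v_6,v_7\}$, corresponding to the forbidden triples $\{1,2,4\},\{1,3,5\},\{1,6,7\}$. For each of these I would use an auxiliary $2$-simplex avoiding $v_1$---namely $\{v_2,v_3,v_4\}$, $\{v_3,v_4,v_5\}$, $\{v_5,v_6,v_7\}$, none of which is forbidden---whose type~3 relator now involves two already-trivialized generators and thereby forces the third to vanish. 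The main obstacle is precisely this two-step argument for the three exceptional edges; once the combinatorics of the seven forbidden triples is verified to be favourable, the rest of the proof is a routine Tietze transformation.
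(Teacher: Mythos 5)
Your proposal is correct and follows essentially the same route as the paper: the same application of Brown's theorem to the $\g(3)$-action on the $1$-connected complex $\B_3(\Z)$, the same star tree and orbit representatives, and the same two-step elimination of the $\hat{g}_e$ (first via the $2$-simplices containing $v_1$, then via auxiliary $2$-simplices for the three exceptional edges). The only difference is cosmetic—you use the auxiliary triangles $\{v_2,v_3,v_4\}$, $\{v_3,v_4,v_5\}$, $\{v_5,v_6,v_7\}$ where the paper uses $\{v_2,v_3,v_4\}$, $\{v_2,v_3,v_5\}$, $\{v_2,v_6,v_7\}$, and both choices avoid the forbidden triples.
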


For the definition of the edge relator, see Subsection~\ref{brown}.

\begin{proof}
We set followings
\begin{itemize}
 \item $V=\{v_1,v_2,v_3,v_4,v_5,v_6,v_7\}$,
 \item $T=\{(v_1,v_i)\mid2\leq{}i\leq7\}\cup{}V$,
 \item $E=\{(v_i,v_j)\mid1\leq{}i<j\leq7\}$,
 \item $F=\{(v_i,v_j,v_k)\mid1\leq{}i<j<k\leq7,\varphi(\{v_i,v_j,v_k\})\in\B_3(\Z_2)\}$. 
\end{itemize}
For $e=(v_i,v_j)\in{}E$, since $w(e)=t(e)$, we choose $g_e=1$, and write
 $g_{ij}=g_e$.
By Theorem~\ref{Brown}, $\g(3)$ is isomorphic to the quotient of
$\left(\underset{1\leq{}i\leq7}{\ast}\g(3)_{v_i}\right)\ast\left(\underset{1\leq{}i<j\leq7}{\ast}\la\hat{g}_{ij}\ra\right)$
by the normal subgroup generated by followings
\begin{enumerate}
 \item $\hat{g}_{1i}$, where $2\leq{}i\leq7$,
 \item $\hat{g}_{ij}^{-1}X_{v_i}\hat{g}_{ij}X_{v_j}^{-1}$, where
       $1\leq{}i<j\leq7$ and $X\in\g(3)_{(v_i,v_j)}$,
 \item $\hat{g}_{\tau}g_{\tau}^{-1}$, where $\tau\in{}F$.
\end{enumerate}
Note that $g_{\tau}=g_{ij}g_{jk}g_{ik}^{-1}$ for $\tau=(v_i,v_j,v_k)$.
Hence, the relation $\hat{g_{\tau}}g_{\tau}^{-1}=1$ is equivalent to the
relation $\hat{g}_{ij}\hat{g}_{jk}=\hat{g}_{ik}$.
Since $\hat{g}_{1i}=1$ for $2\leq{}i\leq7$, we have the relation
 $\hat{g}_{ij}=1$ for $2\leq{}i<j\leq7$ except for $(i,j)=(2,4)$,
 $(3,5)$ and $(6,7)$.
For example, the relation $\hat{g}_{23}=1$ is obtained from the relation
$\hat{g}_{12}\hat{g}_{23}=\hat{g}_{13}$.
In addition, relations 
$\hat{g}_{24}=1$,
$\hat{g}_{35}=1$ and
$\hat{g}_{67}=1$
are obtained from relations
$\hat{g}_{23}\hat{g}_{34}=\hat{g}_{24}$,
$\hat{g}_{23}\hat{g}_{35}=\hat{g}_{25}$ and
$\hat{g}_{26}\hat{g}_{67}=\hat{g}_{27}$,
respectively.
Hence, we have the relation $\hat{g}_{ij}=1$ for $1\leq{}i<j\leq7$.
Therefore, $\g(3)$ is isomorphic to the quotient of
$\underset{1\leq{}i\leq7}{\ast}\g(3)_{v_i}$ by the normal subgroup generated by 
$A=\{X_{v_i}X_{v_j}^{-1}\mid1\leq{}i<j\leq7,X\in\g(3)_{(v_i,v_j)}\}$.
Since $A$ is the set of edge relators, we obtain the claim.
\end{proof}

We next consider presentations of $\g(3)_{v_i}$ for all $1\leq{}i\leq7$
and edge relators.

\subsection{Presentations of $\g(3)_{v_i}$}\label{g(3)i-pre}\

\begin{lem}\label{ses}
For $1\leq{}t\leq{}n$ there is a short exact sequence
$$0\to\Z^{n-1}\to\g(n)_{e_t}\to\g(n-1)\to1.$$
\end{lem}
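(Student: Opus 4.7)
The plan is to identify the stabilizer $\g(n)_{e_t}$ explicitly and then exhibit the short exact sequence by a direct block-matrix computation. First I would observe that $A\in\g(n)_{e_t}$ if and only if the $t$-th column of $A$ equals $e_t$. Combined with the $\g(n)$ condition (off-diagonal entries even, diagonal entries odd), such a matrix, after reindexing so that $t=n$ for concreteness, takes the block form
$$A=\begin{pmatrix} B & 0 \\ v^T & 1 \end{pmatrix},$$
where $B$ is the $(n-1)\times(n-1)$ submatrix obtained by deleting row $t$ and column $t$, and $v\in 2\Z^{n-1}$ collects the off-diagonal entries of the $t$-th row.

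Next I would define $\pi:\g(n)_{e_t}\to\g(n-1)$ to be the map sending $A$ to $B$. That $B\in\g(n-1)$ follows because cofactor expansion along the $t$-th column gives $\det B=\det A=\pm1$, and the mod $2$ condition for $A$ passes directly to $B$. To check $\pi$ is a homomorphism, I would multiply two matrices in the block form above and observe that the $(n-1)\times(n-1)$ upper-left block of the product is $BB'$; the general $t$ case is identical after reindexing. Surjectivity (and a splitting) is obtained from the obvious embedding of $\g(n-1)$ into $\g(n)_{e_t}$ placing $B$ in the $(t,t)$-complementary block and completing with $e_t$ in row and column $t$.

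Finally I would identify $\ker\pi$. An element of the kernel has $B=I$, hence differs from the identity only in the off-diagonal entries of the $t$-th row, each of which lies in $2\Z$. A direct multiplication shows
$$\begin{pmatrix} I & 0 \\ v^T & 1 \end{pmatrix}\begin{pmatrix} I & 0 \\ w^T & 1 \end{pmatrix}=\begin{pmatrix} I & 0 \\ v^T+w^T & 1 \end{pmatrix},$$
so the kernel is abelian and $v\mapsto \tfrac{1}{2}v$ gives an isomorphism $\ker\pi\cong \Z^{n-1}$. Assembling these pieces yields the claimed short exact sequence.

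There is no real obstacle; the proof is essentially a block-matrix bookkeeping exercise. The only thing to be slightly careful about is notational: handling arbitrary $t$ rather than $t=n$ requires interpreting ``delete row and column $t$'' correctly and verifying that the identification of the kernel as generated by $\{E_{ti}\mid i\neq t\}$ (each contributing a factor of $\Z$) is consistent with the block computation above.
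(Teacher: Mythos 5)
Your proof is correct and follows essentially the same route as the paper: identify $\g(n)_{e_t}$ as the matrices whose $t$-th column is $e_t$, project onto the $(t,t)$-complementary minor to get $\g(n-1)$ (split by the obvious embedding, which the paper records as $\rho_t$), and identify the kernel with $\Z^{n-1}$ via the even entries of row $t$, generated by the $E_{ti}$. Your block-matrix computation just spells out the verifications the paper leaves implicit.
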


\begin{proof}
We first note that $A\in\g(n)_{e_t}$ is a matrix whose $t$-column vector
is $e_t$.
For $\Z^{n-1}$ we give the presentation
$\Z^{n-1}=\la{}x_1,x_2,\dots,x_{n-1}\mid{}x_ix_jx_i^{-1}x_j^{-1}(1\leq{}i<j\leq{}n-1)\ra$.
Let $\Z^{n-1}\to\g(n)_{e_t}$ be the homomorphism which sends $x_i$ to
 $E_{ti}$ when $i<t$ and to $E_{ti+1}$ when $i\geq{}t$.
Let $\g(n)_{e_t}\to\g(n-1)$ be the homomorphism which sends $A$ to
 $A_{tt}$, where $A_{ij}$ is the $(n-1)$-submatrix of $A$ obtained by
 removing the $i$-row vector and the $j$-column vector of $A$.
Then, it follows that the sequence
 $0\to\Z^{n-1}\to\g(n)_{e_t}\to\g(n-1)\to1$ is exact.
\end{proof}

\begin{rem}\label{sesrem}
Let $\rho_t:\g(n-1)\to\g(n)_{e_t}$ be the homomorphism defined by
\begin{eqnarray*}
\rho_t(E_{ij})&=&\left\{
\begin{array}{ll}
(E_{ij})_{e_t}&({\rm when}~i,j\leq t-1),\\
(E_{ij+1})_{e_t}&({\rm when}~i\leq t-1,~j\geq{}t),\\
(E_{i+1j})_{e_t}&({\rm when}~j\leq t-1,~i\geq{}t),\\
(E_{i+1j+1})_{e_t}&({\rm when}~i,j\geq{}t),
\end{array}
\right.\\
\rho_t(F_i)&=&\left\{
\begin{array}{ll}
(F_i)_{e_t}&({\rm when}~i\leq{}t-1),\\
(F_{i+1})_{e_t}&({\rm when}~i\geq{}t),
\end{array}
\right.
\end{eqnarray*}
where subscripts $e_t$ are added in order to indicate that these are
 the elements of $\g(n)_{e_t}$, that is, we write $A_{e_t}$ for
 $A\in\g(n)_{e_t}$.
Put $\g(n-1)=\la{}X\mid{}Y\ra$.
Then, from Lemma \ref{ses}, $\g(n)_{e_t}$ is generated by
\begin{itemize}
 \item $(E_{ti})_{e_t}$ for $1\leq{}i\leq{}n$ with $i\neq{}t$,
 \item $(E_{ij})_{e_t}$, $(F_i)_{e_t}$ for $1\leq{}i,j\leq{}n$ with
       $i\neq{}j$ and $i,j\neq{}t$,
\end{itemize}
and has relators
\begin{enumerate}
 \item $[(E_{ti})_{e_t},(E_{tj})_{e_t}]$ for $1\leq{}i,j\leq{}n$ with
       $i\neq{}j$,
 \item $\rho_t(y)$ for $y\in{}Y$,
 \item \begin{itemize}
	\item $(E_{ij})_{e_t}^{-1}(E_{ti})_{e_t}(E_{ij})_{e_t}\cdot(E_{tj})_{e_t}^{-2}(E_{ti})_{e_t}^{-1}$
	      for $1\leq{}i,j\leq{}n$ with $i\neq{}j$ and $i,j\neq{}t$,
	\item $(E_{ij})_{e_t}^{-1}(E_{tj})_{e_t}(E_{ij})_{e_t}\cdot(E_{tj})_{e_t}^{-1}$
	      for $1\leq{}i,j\leq{}n$ with $i\neq{}j$ and $i,j\neq{}t$,
	\item $(E_{ij})_{e_t}^{-1}(E_{tk})_{e_t}(E_{ij})_{e_t}\cdot(E_{tk})_{e_t}^{-1}$
	      for $1\leq{}i,j,k\leq{}n$ with $i,j,k\neq{}t$ and $i,j,k$
	      are mutually different (when $n\geq4$),
	\item $(F_i)_{e_t}^{-1}(E_{ti})_{e_t}(F_i)_{e_t}\cdot(E_{ti})_{e_t}$
	      for $1\leq{}i\leq{}n$ with $i\neq{}t$,
	\item $(F_i)_{e_t}^{-1}(E_{tj})_{e_t}(F_i)_{e_t}\cdot(E_{tj})_{e_t}^{-1}$
	      for $1\leq{}i,j\leq{}n$ with $i\neq{}j$ and $i,j\neq{}t$.
       \end{itemize}
\end{enumerate}
The relators (3) can be rephrased as follows.
\begin{itemize}
 \item $[(E_{ij})_{e_t},(E_{ti})_{e_t}](E_{tj})_{e_t}^2$
       for $1\leq{}i,j\leq{}n$ with $i\neq{}j$ and $i,j\neq{}t$,
 \item $[(E_{ij})_{e_t},(E_{tj})_{e_t}]$
       for $1\leq{}i,j\leq{}n$ with $i\neq{}j$ and $i,j\neq{}t$,
 \item $[(E_{ij})_{e_t},(E_{tk})_{e_t}]$
       for $1\leq{}i,j,k\leq{}n$ with $i,j,k\neq{}t$ and $i,j,k$ are
       mutually different (when $n\geq4$),
 \item $((E_{ti})_{e_t}(F_i)_{e_t})^2$
       for $1\leq{}i\leq{}n$ with $i\neq{}t$,
 \item $[(E_{tj})_{e_t},(F_i)_{e_t}]$
       for $1\leq{}i,j\leq{}n$ with $i\neq{}j$ and $i,j\neq{}t$.
\end{itemize}
\end{rem}

By Lemma~\ref{ses}, Remark~\ref{sesrem} and Proposition~\ref{g(2)}, we
have the following.

\begin{lem}\label{g(3)_i}
$\g(3)_{v_1}$ has a finite presentation with generators 
$\aaa$,
$\aab$,
$\aac$,
$\aad$,
$\aae$ and
$\aaf$, and with the following relators
\begin{enumerate}
 \item[(1.1)] $(\aae)^2$,
	      $(\aaf)^2$,
 \item[(1.2)] $(\aaa\aae)^2$,
	      $(\aab\aaf)^2$,
	      $(\aac\aae)^2$,
	      $(\aac\aaf)^2$,\\
	      $(\aad\aae)^2$,
	      $(\aad\aaf)^2$,
	      $(\aae\aaf)^2$,
 \item[(1.3)] $[\aaa,\aab]$,
	      $[\aaa,\aad]$,
	      $[\aaa,\aaf]$,
	      $[\aab,\aac]$,\\
	      $[\aab,\aae]$,
	      $[\aac,\aaa]\aab^2$,
	      $[\aad,\aab]\aaa^2$.
\end{enumerate}
\end{lem}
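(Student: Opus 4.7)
The plan is to specialize Lemma~\ref{ses} to $n=3$, $t=1$ and then read off the presentation of $\g(3)_{v_1}$ via the recipe in Remark~\ref{sesrem}, using the presentation of $\g(2)$ from Proposition~\ref{g(2)}. Writing $v_1=e_1$, Lemma~\ref{ses} yields the short exact sequence
\[
0\to\Z^2\to\g(3)_{v_1}\to\g(2)\to1,
\]
where the kernel $\Z^2$ is generated by $\aaa$ and $\aab$, and the quotient map sends an element of $\g(3)_{v_1}$ to the $2\times 2$ minor obtained by deleting the first row and column. The splitting $\rho_1:\g(2)\to\g(3)_{v_1}$ sends $E_{12},E_{21},F_1,F_2$ to $\aac,\aad,\aae,\aaf$ respectively, so our six generators agree exactly with those described in Remark~\ref{sesrem}.

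Next I would write down the three families of relators produced by the recipe in Remark~\ref{sesrem}. The single commutator relator for $\Z^2$ gives $[\aaa,\aab]$, which accounts for the first entry of (1.3). Applying $\rho_1$ to each of the seven relators of $\g(2)$ listed in Proposition~\ref{g(2)} produces exactly $\aae^2$, $\aaf^2$ (the relators in (1.1)) together with $(\aac\aae)^2$, $(\aac\aaf)^2$, $(\aad\aae)^2$, $(\aad\aaf)^2$, $(\aae\aaf)^2$, which are five of the seven relators in (1.2). Finally, the "action" relators in the last bullet list of Remark~\ref{sesrem}, specialized to $n=3$, $t=1$, $i,j\in\{2,3\}$, give:
\begin{itemize}
\item $[\aac,\aaa]\aab^2$ and $[\aad,\aab]\aaa^2$ (the "twisted" relators in (1.3)),
\item $[\aac,\aab]$ and $[\aad,\aaa]$ (the plain commutators in (1.3)),
\item $(\aaa\aae)^2$ and $(\aab\aaf)^2$ (the remaining two in (1.2)),
\item $[\aab,\aae]$ and $[\aaa,\aaf]$ (the $E$-$F$ commutators in (1.3)).
\end{itemize}
The $n\geq 4$ bullet in Remark~\ref{sesrem} is vacuous here. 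Comparing the resulting list with the relators (1.1), (1.2), (1.3) in the lemma statement shows they coincide term by term.

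There is no real obstacle beyond careful bookkeeping: the only thing to check is that the translation of indices via $\rho_1$ (which shifts $1,2\mapsto 2,3$ because $t=1$) sends each of the seven relators of $\g(2)$ to the word claimed above, and that the action relators from Remark~\ref{sesrem} enumerate exactly the remaining entries of (1.2) and (1.3). With that verification the lemma follows immediately.
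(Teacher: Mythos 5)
Your proposal is correct and follows exactly the paper's route: the paper derives Lemma~\ref{g(3)_i} precisely by combining Lemma~\ref{ses} (with $t=1$, $n=3$), the recipe of Remark~\ref{sesrem}, and the presentation of $\g(2)$ from Proposition~\ref{g(2)}. Your bookkeeping of how $\rho_1$ shifts indices and how the three families of relators match (1.1)--(1.3) is accurate, and in fact spells out more detail than the paper itself provides.
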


For $X\in{}GL(n;\Z)$, let $\Phi_X:\g(n)\to\g(n)$ be the homomorphism
defined by $\Phi_X(A)=XAX^{-1}$.
Note that this definition is well-defined, since $\g(n)$ is a normal
subgroup of $GL(n;\Z)$.
For $1\leq{}i,j\leq{}n$ with $i\neq{}j$, let $T_{ij}$ denote the matrix
whose $(i,j)$ entry is $1$, diagonal entries are $1$ and others are $0$,
and let $S_i$ denote the matrix whose $(i,i)$ and $(i+1,i+1)$ entries are
$0$, other diagonal entries are $1$, $(i,i+1)$ and $(i+1,i)$ entries are
$1$ and others are $0$.
Using homomorphisms $\Phi_X$ for some $X\in{}GL(n;\Z)$, we provide
presentations of $\g(n)_{v_i}$ for all $2\leq{}i\leq7$.

First, considering $\Phi_{S_1}:\g(3)_{v_1}\to\g(3)_{v_2}$, it follows
that
$\g(3)_{v_2}$ has a finite presentation with generators
$\bba$,
$\bbb$,
$\bbc$,
$\bbd$,
$\bbe$ and
$\bbf$, and with the following relators
\begin{enumerate}
 \item[(2.1)] $(\bbe)^2$,
	      $(\bbf)^2$,
 \item[(2.2)] $(\bba\bbe)^2$,
	      $(\bbb\bbf)^2$,
	      $(\bbc\bbe)^2$,
	      $(\bbc\bbf)^2$,\\
	      $(\bbd\bbe)^2$,
	      $(\bbd\bbf)^2$,
	      $(\bbe\bbf)^2$,
 \item[(2.3)] $[\bba,\bbb]$,
	      $[\bba,\bbd]$,
	      $[\bba,\bbf]$,
	      $[\bbb,\bbc]$,\\
	      $[\bbb,\bbe]$,
	      $[\bbc,\bba]\bbb^2$,
	      $[\bbd,\bbb]\bba^2$.
\end{enumerate}

Next, considering $\Phi_{S_2S_1}:\g(3)_{v_1}\to\g(3)_{v_3}$, it follows
that
$\g(3)_{v_3}$ has a finite presentation with generators
$\cca$,
$\ccb$,
$\ccc$,
$\ccd$,
$\cce$ and
$\ccf$, and with the following relators
\begin{enumerate}
 \item[(3.1)] $(\cce)^2$,
	      $(\ccf)^2$,
 \item[(3.2)] $(\cca\cce)^2$,
	      $(\ccb\ccf)^2$,
	      $(\ccc\cce)^2$,
	      $(\ccc\ccf)^2$,\\
	      $(\ccd\cce)^2$,
	      $(\ccd\ccf)^2$,
	      $(\cce\ccf)^2$,
 \item[(3.3)] $[\cca,\ccb]$,
	      $[\cca,\ccd]$,
	      $[\cca,\ccf]$,
	      $[\ccb,\ccc]$,\\
	      $[\ccb,\cce]$,
	      $[\ccc,\cca]\ccb^2$,
	      $[\ccd,\ccb]\cca^2$.
\end{enumerate}

Next, considering $\Phi_{T_{21}}:\g(3)_{v_1}\to\g(3)_{v_4}$, it follows
that
$\g(3)_{v_4}$ has a finite presentation with generators
$\dda$,
$\ddb$,
$\ddc$,
$\ddd$,
$\dde$ and
$\ddf$, and with the following relators
\begin{enumerate}
 \item[(4.1)] $(\dde)^2$,
	      $(\ddf)^2$,
 \item[(4.2)] $(\dda\dde)^2$,
	      $(\ddb\ddf)^2$,
	      $(\ddc\dde)^2$,\\
	      $(\ddc\ddf)^2$,
	      $(\ddd\dde)^2$,
	      $(\ddd\ddf)^2$,
	      $(\dde\ddf)^2$,
 \item[(4.3)] $[\dda,\ddb]$,
	      $[\dda,\ddd]$,\\
	      $[\dda,\ddf]$,
	      $[\ddb,\ddc]$,
	      $[\ddb,\dde]$,\\
	      $[\ddc,\dda]\ddb^2$,
	      $[\ddd,\ddb]\dda^2$.
\end{enumerate}

Next, considering $\Phi_{S_2T_{21}}:\g(3)_{v_1}\to\g(3)_{v_5}$, it
follows that
$\g(3)_{v_5}$ has a finite presentation with generators
$\eea$,
$\eeb$,
$\eec$,
$\eed$,
$\eee$ and
$\eef$, and with the following relators
\begin{enumerate}
 \item[(5.1)] $(\eee)^2$,
	      $(\eef)^2$,
 \item[(5.2)] $(\eea\eee)^2$,
	      $(\eeb\eef)^2$,
	      $(\eec\eee)^2$,\\
	      $(\eec\eef)^2$,
	      $(\eed\eee)^2$,
	      $(\eed\eef)^2$,
	      $(\eee\eef)^2$,
 \item[(5.3)] $[\eea,\eeb]$,
	      $[\eea,\eed]$,\\
	      $[\eea,\eef]$,
	      $[\eeb,\eec]$,
	      $[\eeb,\eee]$,\\
	      $[\eec,\eea]\eeb^2$,
	      $[\eed,\eeb]\eea^2$.
\end{enumerate}

Next, considering $\Phi_{S_1S_2T_{21}}:\g(3)_{v_1}\to\g(3)_{v_6}$, it
follows that
$\g(3)_{v_6}$ has a finite presentation with generators
$\ffa$,
$\ffb$,
$\ffc$,
$\ffd$,
$\ffe$ and
$\fff$, and with the following relators
\begin{enumerate}
 \item[(6.1)] $(\ffe)^2$,
	      $(\fff)^2$,
 \item[(6.2)] $(\ffa\ffe)^2$,
	      $(\ffb\fff)^2$,
	      $(\ffc\ffe)^2$,\\
	      $(\ffc\fff)^2$,
	      $(\ffd\ffe)^2$,
	      $(\ffd\fff)^2$,
	      $(\ffe\fff)^2$,
 \item[(6.3)] $[\ffa,\ffb]$,
	      $[\ffa,\ffd]$,\\
	      $[\ffa,\fff]$,
	      $[\ffb,\ffc]$,
	      $[\ffb,\ffe]$,\\
	      $[\ffc,\ffa]\ffb^2$,
	      $[\ffd,\ffb]\ffa^2$.
\end{enumerate}

Finally, considering $\Phi_{T_{31}T_{21}}:\g(3)_{v_1}\to\g(3)_{v_7}$, it
follows that
$\g(3)_{v_7}$ has a finite presentation with generators
$\gga$,
$\ggb$,
$\ggc$,
$\ggd$,
$\gge$ and
$\ggf$, and with the following relators
\begin{enumerate}
 \item[(7.1)] $(\gge)^2$,
	      $(\ggf)^2$,
 \item[(7.2)] $(\gga\gge)^2$,
	      $(\ggb\ggf)^2$,\\
	      $(\ggc\gge)^2$,
	      $(\ggc\ggf)^2$,
	      $(\ggd\gge)^2$,\\
	      $(\ggd\ggf)^2$,
	      $(\gge\ggf)^2$,
 \item[(7.3)] $[\gga,\ggb]$,\\
	      $[\gga,\ggd]$,
	      $[\gga,\ggf]$,\\
	      $[\ggb,\ggc]$,
	      $[\ggb,\gge]$,\\
	      $[\ggc,\gga]\ggb^2$,
	      $[\ggd,\ggb]\gga^2$.
\end{enumerate}

\subsection{On edge relations}\label{edge}\

Note that
$$\g(3)_{(v_1,v_2)}=\g(3)_{(v_1,v_4)}=\g(3)_{(v_2,v_4)},$$
$$\g(3)_{(v_1,v_3)}=\g(3)_{(v_1,v_5)}=\g(3)_{(v_3,v_5)},$$
$$\g(3)_{(v_2,v_3)}=\g(3)_{(v_2,v_6)}=\g(3)_{(v_3,v_6)},$$
$$\g(3)_{(v_1,v_6)}=\g(3)_{(v_1,v_7)}=\g(3)_{(v_6,v_7)},$$
$$\g(3)_{(v_2,v_5)}=\g(3)_{(v_2,v_7)}=\g(3)_{(v_5,v_7)},$$
$$\g(3)_{(v_3,v_4)}=\g(3)_{(v_3,v_7)}=\g(3)_{(v_4,v_7)}.$$

It follows that
$\g(3)_{(v_1,v_2)}$,
$\g(3)_{(v_1,v_4)}$ and
$\g(3)_{(v_2,v_4)}$
are generated by
\begin{eqnarray*}
\left(
\begin{array}{rrr}
1&0&2\\
0&1&0\\
0&0&1
\end{array}
\right),&
\left(
\begin{array}{rrr}
1&0&0\\
0&1&2\\
0&0&1
\end{array}
\right),&
\left(
\begin{array}{rrr}
1&0&0\\
0&1&0\\
0&0&-1
\end{array}
\right).
\end{eqnarray*}
Then we have the following edge relations
\begin{itemize}
 \item $\aab=\bbc=\ddb\ddc^{-1}$,
 \item $\aac=\bbb=\ddc$,
 \item $\aaf=\bbf=\ddf$.
\end{itemize}

Next, considering $\Phi_{S_2}:\g(3)_{(v_1,v_2)}\to\g(3)_{(v_1,v_3)}$, it
follows that
$\g(3)_{(v_1,v_3)}$,
$\g(3)_{(v_1,v_5)}$ and
$\g(3)_{(v_3,v_5)}$
are generated by
\begin{eqnarray*}
\left(
\begin{array}{rrr}
1&2&0\\
0&1&0\\
0&0&1
\end{array}
\right),&
\left(
\begin{array}{rrr}
1&0&0\\
0&1&0\\
0&2&1
\end{array}
\right),&
\left(
\begin{array}{rrr}
1&0&0\\
0&-1&0\\
0&0&1
\end{array}
\right).
\end{eqnarray*}
Then we have the following edge relations
\begin{itemize}
 \item $\aaa=\ccc=\eeb\eec^{-1}$,
 \item $\aad=\ccb=\eec$,
 \item $\aae=\ccf=\eef$.
\end{itemize}

Next, considering $\Phi_{S_1S_2}:\g(3)_{(v_1,v_2)}\to\g(3)_{(v_2,v_3)}$,
it follows that
$\g(3)_{(v_2,v_3)}$,
$\g(3)_{(v_2,v_6)}$ and
$\g(3)_{(v_3,v_6)}$
are generated by
\begin{eqnarray*}
\left(
\begin{array}{rrr}
1&0&0\\
2&1&0\\
0&0&1
\end{array}
\right),&
\left(
\begin{array}{rrr}
1&0&0\\
0&1&0\\
2&0&1
\end{array}
\right),&
\left(
\begin{array}{rrr}
-1&0&0\\
0&1&0\\
0&0&1
\end{array}
\right).
\end{eqnarray*}
Then we have the following edge relations
\begin{itemize}
 \item $\bba=\ccd=\ffb\ffc^{-1}$,
 \item $\bbd=\cca=\ffc$,
 \item $\bbe=\cce=\fff$.
\end{itemize}

Next, considering $\Phi_{T_{32}}:\g(3)_{(v_1,v_2)}\to\g(3)_{(v_1,v_6)}$,
it follows that
$\g(3)_{(v_1,v_6)}$,
$\g(3)_{(v_1,v_7)}$ and
$\g(3)_{(v_6,v_7)}$
are generated by
\begin{eqnarray*}
\left(
\begin{array}{rrr}
1&-2&2\\
0&1&0\\
0&0&1
\end{array}
\right),&
\left(
\begin{array}{rrr}
1&0&0\\
0&-1&2\\
0&-2&3
\end{array}
\right),&
\left(
\begin{array}{rrr}
1&0&0\\
0&1&0\\
0&2&-1
\end{array}
\right).
\end{eqnarray*}
Then we have the following edge relations
\begin{itemize}
 \item $\aaa^{-1}\aab=\ffd\\
       =\ggf\ggb\ggc^{-1}\gge\\
       \cdot\gga\ggd^{-1}$,
 \item $\aad\aaf\aac\aae=\ffa\\
       =\ggd\ggf\ggc\gge$,
 \item $\aad\aaf=\ffe=\ggd\ggf$.
\end{itemize}

Next, considering
$\Phi_{S_1T_{32}}:\g(3)_{(v_1,v_2)}\to\g(3)_{(v_2,v_5)}$, it follows
that
$\g(3)_{(v_2,v_5)}$,
$\g(3)_{(v_2,v_7)}$ and
$\g(3)_{(v_5,v_6)}$
are generated by
\begin{eqnarray*}
\left(
\begin{array}{rrr}
1&0&0\\
-2&1&2\\
0&0&1
\end{array}
\right),&
\left(
\begin{array}{rrr}
-1&0&2\\
0&1&0\\
-2&0&3
\end{array}
\right),&
\left(
\begin{array}{rrr}
1&0&0\\
0&1&0\\
2&0&-1
\end{array}
\right).
\end{eqnarray*}
Then we have the following edge relations
\begin{itemize}
 \item $\bba^{-1}\bbb=\eed=\ggc$,
 \item $\bbd\bbf\bbc\bbe=\eea\\
       =\ggb\ggc^{-1}$,
 \item $\bbd\bbf=\eee=\ggf$.
\end{itemize}

Next, considering
$\Phi_{S_2S_1T_{32}}:\g(3)_{(v_1,v_2)}\to\g(3)_{(v_3,v_4)}$, it follows
that
$\g(3)_{(v_3,v_4)}$,
$\g(3)_{(v_3,v_7)}$ and
$\g(3)_{(v_4,v_7)}$
are generated by
\begin{eqnarray*}
\left(
\begin{array}{rrr}
1&0&0\\
0&1&0\\
-2&2&1
\end{array}
\right),&
\left(
\begin{array}{rrr}
-1&2&0\\
-2&3&0\\
0&0&1
\end{array}
\right),&
\left(
\begin{array}{rrr}
1&0&0\\
2&-1&0\\
0&0&1
\end{array}
\right).
\end{eqnarray*}
Then we have the following edge relations
\begin{itemize}
 \item $\cca^{-1}\ccb=\ddd=\ggd$,
 \item $\ccd\ccf\ccc\cce=\dda\\
       =\gga\ggd^{-1}$,
 \item $\ccd\ccf=\dde=\gge$.
\end{itemize}

Next, considering
$\Phi_{T_{31}T_{32}}:\g(3)_{(v_1,v_2)}\to\g(3)_{(v_5,v_6)}$, it follows
that $\g(3)_{(v_5,v_6)}$ is generated by
\begin{eqnarray*}
\left(
\begin{array}{rrr}
-1&-2&2\\
0&1&0\\
-2&-2&3
\end{array}
\right),&
\left(
\begin{array}{rrr}
1&0&0\\
-2&-1&2\\
-2&-2&3
\end{array}
\right),&
\left(
\begin{array}{rrr}
1&0&0\\
0&1&0\\
2&2&-1
\end{array}
\right).
\end{eqnarray*}
Then we have the following edge relations
\begin{itemize}
 \item $\eeb^{-1}\eea=\ffc\fff\ffe\ffd^{-1}$,
 \item $\eec\eef\eee\eed^{-1}=\ffb^{-1}\ffa$,
 \item $\eec\eee=\ffc\ffe$.
\end{itemize}

Next, considering
$\Phi_{S_2T_{31}T_{32}}:\g(3)_{(v_1,v_2)}\to\g(3)_{(v_4,v_6)}$, it
follows that $\g(3)_{(v_4,v_6)}$ is generated by
\begin{eqnarray*}
\left(
\begin{array}{rrr}
-1&2&-2\\
-2&3&-2\\
0&0&1
\end{array}
\right),&
\left(
\begin{array}{rrr}
1&0&0\\
-2&3&-2\\
-2&2&-1
\end{array}
\right),&
\left(
\begin{array}{rrr}
1&0&0\\
2&-1&2\\
0&0&1
\end{array}
\right).
\end{eqnarray*}
Then we have the following edge relations
\begin{itemize}
 \item $\ddb^{-1}\dda\\
       =\ffb\ffc^{-1}\fff\ffe\ffa\ffd$,
 \item $\ddc\ddf\dde\ddd^{-1}=\ffb^{-1}\ffa^{-1}$,
 \item $\ddc\dde=\ffb\ffc^{-1}\ffe\ffa$.
\end{itemize}

Finally, considering
$\Phi_{S_1S_2T_{31}T_{32}}:\g(3)_{(v_1,v_2)}\to\g(3)_{(v_4,v_5)}$, it
follows that $\g(3)_{(v_4,v_5)}$ is generated by
\begin{eqnarray*}
\left(
\begin{array}{rrr}
3&-2&-2\\
2&-1&-2\\
0&0&1
\end{array}
\right),&
\left(
\begin{array}{rrr}
3&-2&-2\\
0&1&0\\
2&-2&-1
\end{array}
\right),&
\left(
\begin{array}{rrr}
-1&2&2\\
0&1&0\\
0&0&1
\end{array}
\right).
\end{eqnarray*}
Then we have the following edge relations
\begin{itemize}
 \item $\ddb^{-1}\dda^{-1}\\
       =\eeb\eec^{-1}\eef\eee\eea\eed$,
 \item $\ddb\ddc^{-1}\ddf\dde\dda\ddd\\
       =\eeb^{-1}\eea^{-1}$,
 \item $\ddb\ddc^{-1}\dde\dda\\
       =\eeb\eec^{-1}\eee\eea$.
\end{itemize}

Therefore, using Tietze transformations, by Lemma~\ref{g(3)2}, we obtain
the presentation for Proposition~\ref{g(3)}
(For more details see Appendix~\ref{appendix}).
Thus, Theorem~\ref{thm} is valid when $n=3$.

\section{A simplicial complex on which $\g(n)$ acts}\label{gb}

Let $\gb_n(\Z)$ denote the subcomplex of $\B_n(\Z)$ whose
$(k-1)$-simplex $\{x_1,x_2,\dots,x_k\}$ is the set of $k$-vectors
$x_i\in\Z^n$ such that $x_1,x_2,\dots,x_k$ are mutually different column
vectors of a matrix $A\in\g(n)$.
Note that for a vertex $v$, we have $v\equiv{}e_i\mod2$ for some
$1\leq{}i\leq{}n$, where $e_1,e_2,\dots,e_n$ are canonical normal
vectors in $\Z^n$.
For a $(k-1)$-simplex $\Delta=\{x_1,x_2,\dots,x_k\}$, $A\in\g(n)$ is an
{\it extension} of $\Delta$ if each $x_i$ is a column vector of $A$.

In this section, we prove the following proposition.

\begin{prop}\label{gbnz}
For $n\geq4$, the simplicial complex $\gb_n(\Z)$ is simply connected.
\end{prop}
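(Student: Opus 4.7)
Since $\B_n(\Z)$ is $(n-2)$-connected by Day--Putman \cite{dp}, it is in particular simply connected (indeed $2$-connected) for $n \geq 4$. The plan is to exploit this: any loop in $\gb_n(\Z) \subset \B_n(\Z)$ bounds a disk in $\B_n(\Z)$, and I will modify such a disk to lie entirely in $\gb_n(\Z)$. As a first step I would establish the combinatorial criterion that a simplex $\{x_0, \ldots, x_k\}$ of $\B_n(\Z)$ lies in $\gb_n(\Z)$ if and only if the mod-$2$ reductions $\bar x_0, \ldots, \bar x_k$ are distinct elements of $\{\bar e_1, \ldots, \bar e_n\} \subset \Z_2^n$. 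The forward implication is immediate. For the converse, given any extension $A \in GL(n;\Z)$ of the simplex with extra columns $y_{k+1}, \ldots, y_n$, an invertible $\Z$-linear transformation of the $y_j$'s followed by subtracting multiples of the $x_i$'s produces a new extension whose columns all reduce mod $2$ to standard basis vectors; after a permutation of columns this lies in $\g(n)$.

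Connectedness of $\gb_n(\Z)$ then follows easily. Given any vertex $v \in \gb_n(\Z)$, choose $A \in \g(n)$ having $v$ as a column and write $A = g_1 g_2 \cdots g_m$ in the generators $E_{ij}^{\pm 1}, F_i$ of $\g(n)$ provided by \cite{mp}. Right multiplication by any such generator modifies only a single column of a matrix, so the partial products $A_k := g_1 \cdots g_k$ give a sequence of matrices whose columns form $(n-1)$-simplices of $\gb_n(\Z)$ with consecutive simplices sharing a common $(n-2)$-face. This chain connects the column set $\{e_1, \ldots, e_n\}$ of $A_0 = I$ to that of $A_m = A$, yielding a path in $\gb_n(\Z)$ from $e_1$ to $v$.

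For simple connectivity, let $\gamma$ be an edge loop in $\gb_n(\Z)$; by simple connectivity of $\B_n(\Z)$, $\gamma$ bounds a simplicial disk $D \subset \B_n(\Z)$. By the characterization above, the obstructions to $D$ lying in $\gb_n(\Z)$ are exactly the interior vertices $v$ of $D$ with $\bar v \notin \{\bar e_1, \ldots, \bar e_n\}$, together with the interior edges $\{v,w\}$ with $\bar v = \bar w$. I would remove these obstructions iteratively. For a bad interior vertex $v$, the link $\lambda$ of $v$ in $D$ is a loop in $\B_n(\Z) \setminus \{v\}$; using the $2$-connectivity of $\B_n(\Z)$ for $n\geq 4$, $\lambda$ bounds a disk $D'$ in $\B_n(\Z)$ which (after pushing off the isolated point $v$) may be taken to avoid $v$; replacing the star of $v$ in $D$ by $D'$ strictly decreases the number of bad vertices. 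Bad interior edges are dealt with by an analogous local move.

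The main obstacle is making these replacement steps rigorous. Producing a disk $D'$ that both bounds $\lambda$ and avoids $v$ genuinely requires connectivity of $\B_n(\Z)$ beyond mere simple connectivity, since in a complex that is only $1$-connected, deleting a vertex can destroy simple connectivity; this is precisely why the hypothesis $n \geq 4$, giving $(n-2) \geq 2$, is used. One also needs a well-founded measure of the ``badness'' of a disk (for instance, the number of interior simplices violating the criterion of Step~1) to guarantee that the iterative modification terminates and yields a disk in $\gb_n(\Z)$ bounding the original loop $\gamma$.
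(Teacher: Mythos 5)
Your preliminary steps are sound: the characterization of simplices of $\gb_n(\Z)$ as those simplices of $\B_n(\Z)$ whose vertices reduce mod $2$ to distinct standard basis vectors is correct (the completion argument works because the parabolic subgroup of $GL(n;\Z)$ fixing the given columns surjects mod $2$ onto the corresponding parabolic of $GL(n;\Z_2)$), and the path-connectedness argument via partial products of generators is fine. The gap is in the main step. First, $2$-connectivity of $\B_n(\Z)$ does not let you conclude that the loop $\lk_D(v)$ bounds a disk avoiding $v$: removing a vertex (or its open star) from a $2$-connected simplicial complex can destroy simple connectivity, and the general-position intuition from manifolds does not apply to these complexes. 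What such a surgery step actually requires is high connectivity of $\lk_{\B_n(\Z)}(v)$ --- or better, of the subcomplex of that link spanned by vertices that are ``less bad'' than $v$ --- and establishing that is essentially the same work as proving the proposition. Second, even granting a replacement disk $D'$, its new interior vertices can be bad in arbitrary ways, so ``strictly decreases the number of bad vertices'' is unjustified; you acknowledge the need for a well-founded complexity, but identifying and controlling that complexity is the entire content of the proof rather than a detail to be filled in later. As written, the argument is a plan with the two hardest steps asserted.

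For comparison, the paper does not invoke Day--Putman's connectivity of $\B_n(\Z)$ as a black box; it adapts their method inside $\gb_n(\Z)$ itself. It first shows by a Euclidean-algorithm argument that $\gb_n(\Z)$ and the links of its simplices of codimension at least $2$ are path-connected, then defines a height $\R(v)=|v_n|$ on vertices and homotopes any loop so as to strictly decrease the maximum of $\R$ over its vertices, treating the odd and even cases of that maximum separately and producing the replacement vertices by explicit elementary-matrix manipulations (this is where $n\geq4$ enters, to find a fourth residue class for an auxiliary vertex); a loop of maximal height $0$ lies in $\lk_{\gb_n(\Z)}(e_n)\subset\st_{\gb_n(\Z)}(e_n)$ and is therefore null-homotopic. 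If you want to complete your outline, the honest route is to carry out exactly this kind of height reduction, which is what your missing measure of badness would have to be.
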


In a proof of this proposition, we will use the idea of Day-Putman
\cite{dp} for proving that $\B_n(\Z)$ is $(n-2)$-connected.

\subsection{Preparation}\

Let $X$ be a simplicial complex.
Then we define followings.
\begin{itemize}
 \item For a simplex $\Delta\in{}X$, $\st_X(\Delta)$ is the subcomplex
       of $X$ whose simplex $\Delta'\in{}X$ satisfies that $\Delta$,
       $\Delta'\subset\Delta''$ for some simplex $\Delta''\in{}X$.
       We also define $\st_X(\emptyset)=X$.
 \item For a simplex $\Delta\in{}X$, $\lk_X(\Delta)$ is the subcomplex
       of $\st_X(\Delta)$ whose simplex $\Delta'\in\st_X(\Delta)$ does
       not intersect $\Delta$.
       We also define $\lk_X(\emptyset)=X$.
\end{itemize}

Here, we prove followings.

\begin{lem}\label{gbnz0}
For $n\geq2$, $\gb_n(\Z)$ is path connected.
\end{lem}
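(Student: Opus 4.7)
The plan is to recast path-connectedness of $\gb_n(\Z)$ as the statement that $\g(n)$ is generated by the stabilizers $\g(n)_{e_1},\dots,\g(n)_{e_n}$, and then to verify this directly using the generators $E_{ij},F_i$ of $\g(n)$ recalled in the introduction.

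First I will observe that every vertex $v$ of $\gb_n(\Z)$ lies in some $(n-1)$-simplex: by definition $v$ extends to a matrix $A \in \g(n)$, and then the set of columns $\sigma_A = \{Ae_1,\dots,Ae_n\}$ is an $(n-1)$-simplex of $\gb_n(\Z)$ containing $v$. Because $A \equiv I \pmod 2$, the $i$-th column of $A$ is the unique vertex of $\sigma_A$ with $v \equiv e_i \pmod 2$, so $A \mapsto \sigma_A$ is a bijection between $\g(n)$ and the top-dimensional simplices of $\gb_n(\Z)$. Since each $(n-1)$-simplex is a clique in the edge graph of $\gb_n(\Z)$, path-connectedness of $\gb_n(\Z)$ reduces to connectedness of the \emph{simplex graph} $\Gamma$ whose vertices are top-dimensional simplices and whose edges join pairs sharing a vertex.

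Under the bijection above, $\sigma_A$ and $\sigma_B$ share a vertex precisely when $Ae_i = Be_i$ for some $i$, equivalently when $B^{-1}A \in \g(n)_{e_i}$. Hence a $\Gamma$-path from $\sigma_I$ to $\sigma_A$ is exactly a factorisation $A = g_1 g_2 \cdots g_m$ with each $g_k$ lying in some $\g(n)_{e_{i_k}}$, so connectedness of $\Gamma$ is equivalent to the group-theoretic identity $\g(n) = \langle \bigcup_{i=1}^n \g(n)_{e_i} \rangle$. To finish, it suffices to observe that for $n \geq 2$ each generator of $\g(n)$ already lies in some $\g(n)_{e_k}$: $F_i$ fixes every $e_k$ with $k \neq i$, and $E_{ij}$ fixes every $e_k$ with $k \neq j$, and such $k$ exists because $n \geq 2$. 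I expect no serious obstacle; the main content is the reformulation in the first two paragraphs, and the final generation check is immediate from the definitions of the generators.
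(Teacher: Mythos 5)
Your proof is correct, but it follows a genuinely different route from the paper's. You translate path-connectedness into a group-theoretic statement: every vertex lies in a top-dimensional simplex $\sigma_A=\{Ae_1,\dots,Ae_n\}$, top simplices are cliques, $\sigma_A$ and $\sigma_B$ meet iff $B^{-1}A$ stabilizes some $e_i$, and hence connectivity follows from $\g(n)=\la\bigcup_i\g(n)_{e_i}\ra$, which you get for free because each generator $E_{ij}$, $F_i$ from the McCarthy--Pinkall generating set fixes some $e_k$ (here $n\geq2$ is exactly what guarantees such a $k$). All the intermediate claims check out: the map $A\mapsto\sigma_A$ is a bijection since the columns of $A\in\g(n)$ are distinguished mod $2$, and a path in your simplex graph from $\sigma_I$ to $\sigma_A$ is precisely a factorization of $A$ into stabilizer elements. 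The paper instead argues directly and self-containedly: for $n=2$ it runs a Euclidean algorithm on coordinates to walk any vertex down to $e_1$ or $e_2$, and for $n\geq3$ it reduces a pair of vertices by elementary matrices $E_{ij}^u$ to the $n=2$ situation --- in effect re-deriving a form of the generation statement geometrically rather than quoting it. What your approach buys is brevity and a conceptual reformulation (the standard ``connected quotient plus generation by vertex stabilizers'' criterion, noting that $\g(n)\backslash\gb_n(\Z)$ is a single simplex); what it costs is reliance on the external fact that $E_{ij}$ and $F_i$ generate $\g(n)$, which the paper cites from \cite{mp} but does not use in its own proof of this lemma. Since that generation result is assumed throughout the paper anyway, there is no circularity, and your argument stands.
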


\begin{proof}
We first consider the case $n=2$.
Let $v_0=v_{01}e_1+v_{02}e_2\in\gb_2(\Z)$ be a vertex.
Then there exists a vertex $v_1=v_{11}e_1+v_{12}e_2\in\gb_2(\Z)$ such
that $\{v_0,v_1\}\in\gb_2(\Z)$.
Note that $v_{01}v_{12}-v_{02}v_{11}=\pm1$.
By Euclidean algorithm, we can suppose that $|v_{01}|>|v_{11}|$.
Similarly, there exist vertices
$v_2=v_{21}e_1+v_{22}e_2,\dots,v_k=v_{k1}e_1+v_{k2}e_2\in\gb_2(\Z)$ such
 that $\{v_i,v_{i+1}\}\in\gb_2(\Z)$, $|v_{i1}|>|v_{i+1\;1}|$ for
 $1\leq{}i\leq{}k-1$ and $v_k=e_1$ or $e_2$, for some positive integer
 $k$.
Hence, $\gb_2(\Z)$ is path connected.

Next, we suppose $n\geq3$.
Let $v,w\in\gb_n(\Z)$ be vertices.
Without loss of generality, we suppose $v\equiv{}e_1$ and
 $w\equiv{}e_2\mod2$.
Then there is an extension $A\in\g(n)$ of $v$.
We write $A^{-1}w=\sum_{i=1}^na_ie_i$.
Let $S_{A^{-1}w}=\sum_{i=3}^{n}|a_i|$.
For $3\leq{}i\leq{}n$, if $|a_2|<|a_i|$, there is an integer $u\in\Z$
 such that $|a_2|>|a_i+2ua_2|$.
Then we have that $S_{E_{i2}^uA^{-1}w}<S_{A^{-1}w}$ and
 $E_{i2}^uA^{-1}v=e_1$.
If $|a_2|>|a_i|\neq0$, there is an integer $u'\in\Z$ such that
$|a_2+2u'a_i|<|a_i|$.
In addition, there is an integer $u''\in\Z$ such that
 $|a_2+2u'a_1|>|a_i+2u''(a_2+2u'a_1)|$.
Then we have that $S_{E_{i2}^{u''}E_{2i}^{u'}A^{-1}w}<S_{A^{-1}w}$ and
 $E_{i2}^{u''}E_{2i}^{u'}A^{-1}v=e_1$.
Repeating this operation, we conclude that there exists $B\in\g(n)$ such
 that $S_{Bw}=0$ and $Bv=e_1$.
Note that $Bw$ can be regarded as a vertex in $\gb_2(\Z)$.
Hence, $Bw$ is joined to $e_1$, that is, $Bw$ is joined to $Bv$.
The action of $B^{-1}$ brings the path joining $Bw$ with $Bv$ to the
 path joining $w$ with $v$.
Thus, $\gb_n(\Z)$ is path connected.
\end{proof}

\begin{lem}\label{stlk}
Let $\Delta\in\gb_n(\Z)$ be a $(k-1)$-simplex.
Then we have followings.
\begin{itemize}
 \item $\st_{\gb_n(\Z)}(\Delta)$ is isomorphic to
       $\st_{\gb_n(\Z)}(\{e_1,e_2,\dots,e_k\})$ as a simplicial
       complex.
 \item $\lk_{\gb_n(\Z)}(\Delta)$ is isomorphic to 
       $\lk_{\gb_n(\Z)}(\{e_1,e_2,\dots,e_k\})$ as a simplicial
       complex.
\end{itemize}
\end{lem}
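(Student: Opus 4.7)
The plan is to build a simplicial automorphism of $\gb_n(\Z)$ that carries $\{e_1, e_2, \dots, e_k\}$ onto $\Delta$; restricting such an automorphism to stars and links yields both isomorphisms at once. To construct it, I would first invoke the definition of $\gb_n(\Z)$ to obtain an extension $A\in\g(n)$ of $\Delta$, so that $\Delta=\{x_1,\dots,x_k\}$ where each $x_j$ is the $i_j$-th column of $A$ for some indices $i_1,\dots,i_k$ that are pairwise distinct (since $A\equiv I\pmod 2$) and satisfy $x_j\equiv e_{i_j}\pmod 2$. Because $\g(n)$ already acts on $\gb_n(\Z)$ by simplicial automorphisms (the action $B\mapsto AB$ sends columns of $B\in\g(n)$ to columns of $AB\in\g(n)$), the action of $A$ gives a simplicial isomorphism sending $\{e_{i_1},\dots,e_{i_k}\}$ onto $\Delta$, and hence $\st_{\gb_n(\Z)}(\{e_{i_1},\dots,e_{i_k}\})\to\st_{\gb_n(\Z)}(\Delta)$ and $\lk_{\gb_n(\Z)}(\{e_{i_1},\dots,e_{i_k}\})\to\lk_{\gb_n(\Z)}(\Delta)$.

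It remains to connect $\{e_1,\dots,e_k\}$ with $\{e_{i_1},\dots,e_{i_k}\}$. Let $\sigma$ be any permutation of $\{1,\dots,n\}$ with $\sigma(j)=i_j$ for $1\leq j\leq k$, and let $P_\sigma$ denote the associated permutation matrix. Although $P_\sigma\notin\g(n)$ in general, I claim that $v\mapsto P_\sigma v$ induces a simplicial automorphism of $\gb_n(\Z)$. Indeed, if $\{y_1,\dots,y_r\}\in\gb_n(\Z)$ has extension $C\in\g(n)$, then $P_\sigma C P_\sigma^{-1}\equiv I\pmod 2$, so $P_\sigma C P_\sigma^{-1}\in\g(n)$, and a direct check shows that for each $y_j$ that is the $m_j$-th column of $C$, the vector $P_\sigma y_j$ is the $\sigma(m_j)$-th column of $P_\sigma C P_\sigma^{-1}$; hence $\{P_\sigma y_1,\dots,P_\sigma y_r\}\in\gb_n(\Z)$. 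The inverse $v\mapsto P_{\sigma^{-1}}v$ is constructed in the same way, so $v\mapsto P_\sigma v$ is a simplicial isomorphism of $\gb_n(\Z)$ sending $\{e_1,\dots,e_k\}$ to $\{e_{i_1},\dots,e_{i_k}\}$.

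Composing the two automorphisms yields simplicial isomorphisms $\st_{\gb_n(\Z)}(\{e_1,\dots,e_k\})\to\st_{\gb_n(\Z)}(\Delta)$ and $\lk_{\gb_n(\Z)}(\{e_1,\dots,e_k\})\to\lk_{\gb_n(\Z)}(\Delta)$, as required. The only delicate point is the one addressed in the second paragraph: we cannot simply act by $P_\sigma$ through left multiplication on extensions, since $P_\sigma\notin\g(n)$; the trick is that conjugation (rather than left multiplication) by $P_\sigma$ preserves $\g(n)$, and this is precisely what is needed to verify that $v\mapsto P_\sigma v$ sends simplices of $\gb_n(\Z)$ to simplices of $\gb_n(\Z)$.
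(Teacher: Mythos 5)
Your proposal is correct and follows essentially the same route as the paper: act by an extension $A\in\g(n)$ (the paper uses $A^{-1}$ in the other direction) to identify $\Delta$ with $\{e_{i(1)},\dots,e_{i(k)}\}$, then reduce to $\{e_1,\dots,e_k\}$ by a coordinate permutation. The only difference is that the paper dismisses the permutation step as "clear," whereas you justify it by checking that conjugation by $P_\sigma$ preserves $\g(n)$ and hence that $v\mapsto P_\sigma v$ is a simplicial automorphism of $\gb_n(\Z)$ --- a worthwhile detail, correctly carried out.
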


\begin{proof}
For $\Delta=\{x_1,x_2,\dots,x_k\}$, suppose $x_j\equiv{}e_{i(j)}\mod2$.
Let $A\in\g(n)$ be an extension of $\Delta$.
Then restrictions of the action of $A^{-1}$ on $\gb_n(\Z)$
\begin{align*}
A^{-1}|_{\st_{\gb_n(\Z)}(\Delta)}&:\st_{\gb_n(\Z)}(\Delta)\to\st_{\gb_n(\Z)}(\{e_{i(1)},e_{i(2)},\dots,e_{i(k)}\}),\\
A^{-1}|_{\lk_{\gb_n(\Z)}(\Delta)}&:\lk_{\gb_n(\Z)}(\Delta)\to\lk_{\gb_n(\Z)}(\{e_{i(1)},e_{i(2)},\dots,e_{i(k)}\})
\end{align*}
are isomorphisms as a simplicial map.
It is clear that
$\st_{\gb_n(\Z)}(\{e_{i(1)},e_{i(2)},\dots,e_{i(k)}\})$ and
$\lk_{\gb_n(\Z)}(\{e_{i(1)},e_{i(2)},\dots,e_{i(k)}\})$ are respectively
 isomorphic to $\st_{\gb_n(\Z)}(\{e_1,e_2,\dots,e_k\})$ and
 $\lk_{\gb_n(\Z)}(\{e_1,e_2,\dots,e_k\})$.
Thus, we obtain the claim.
\end{proof}

\begin{cor}\label{lk-path}
Let $\Delta\in\gb_n(\Z)$ be a $(k-1)$-simplex.
If $n-k\geq2$, then $\lk_{\gb_n(\Z)}(\Delta)$ is path connected.
\end{cor}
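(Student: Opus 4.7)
The plan is to reduce to the model case $\Delta = \{e_1, \dots, e_k\}$ using Lemma~\ref{stlk}, and then relate the link $L := \lk_{\gb_n(\Z)}(\{e_1, \dots, e_k\})$ to $\gb_{n-k}(\Z)$ via the projection $\pi(v) := (v_{k+1}, \dots, v_n)^T$ onto the last $n-k$ coordinates. Path connectedness of $L$ will then follow from path connectedness of $\gb_{n-k}(\Z)$, which is guaranteed by Lemma~\ref{gbnz0} since $n-k \geq 2$.

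The technical heart is a structural description of $L$. I will show that $v$ is a vertex of $L$ if and only if $v \equiv e_j \mod 2$ for some $j > k$ and $\pi(v)$ is a vertex of $\gb_{n-k}(\Z)$. For the forward direction, any extension $A \in \g(n)$ of $\{e_1, \dots, e_k, v\}$ is forced by the parity conditions of $\g(n)$ to have $e_l$ in column $l$ for $l = 1, \dots, k$, yielding the block form $A = \bigl(\begin{smallmatrix} I_k & * \\ 0 & A' \end{smallmatrix}\bigr)$ with $A' \in \g(n-k)$ extending $\pi(v)$. For the converse, given $A'$ extending $\pi(v)$, one lifts to an extension of $\{e_1, \dots, e_k, v\}$ by using the block form above and filling $*$ so that $v$ appears as the appropriate column; any even entries work, and the required evenness of the first $k$ coordinates of $v$ is automatic from the mod $2$ type. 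The same argument shows $\{v, w\}$ is an edge in $L$ iff $\{\pi(v), \pi(w)\}$ is an edge in $\gb_{n-k}(\Z)$, and crucially that any edge in $\gb_{n-k}(\Z)$ lifts to an edge in $L$ between \emph{any} choice of preimages in $L$.

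With this in hand, path connectedness of $L$ is nearly immediate. Given $v, w \in L$, I distinguish two cases. If $\pi(v) \neq \pi(w)$, Lemma~\ref{gbnz0} provides an edge path $\pi(v) = a_0, a_1, \dots, a_r = \pi(w)$ in $\gb_{n-k}(\Z)$; picking any preimages $v_i \in L$ of $a_i$ with $v_0 = v$ and $v_r = w$ (for instance $v_i = (0, \dots, 0, a_i^T)^T$ at intermediate steps) yields an edge path from $v$ to $w$ in $L$ by the lifting property. If $\pi(v) = \pi(w)$, I insert an auxiliary vertex $v' \in L$ with $\pi(v') \neq \pi(v)$, which exists since $\gb_{n-k}(\Z)$ has multiple vertices when $n-k \geq 2$ and any lift of a neighbor of $\pi(v)$ suffices; then $v, v', w$ is a path in $L$.

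The main obstacle I expect is verifying the structural claim cleanly, especially the freedom in choosing the upper-right block $*$: one must confirm that no hidden obstruction beyond the parity conditions prevents lifting, so that every preimage of every vertex and edge of $\gb_{n-k}(\Z)$ genuinely lies in $L$. Once this block-decomposition lemma is established, the rest of the proof is a routine path lift, with the hypothesis $n-k \geq 2$ entering exactly to invoke Lemma~\ref{gbnz0} and to guarantee that $\gb_{n-k}(\Z)$ has more than one vertex.
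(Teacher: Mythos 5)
Your argument is correct, but it is not the route the paper takes. The paper's proof of this corollary is essentially a pointer: it asserts that $\lk_{\gb_n(\Z)}(\{e_1,\dots,e_k\})$ is path connected ``by an argument similar to the proof of Lemma~\ref{gbnz0}'' (i.e.\ by re-running the Euclidean-algorithm reduction while keeping the vectors $e_1,\dots,e_k$ fixed), and then transports the conclusion to a general $\Delta$ via Lemma~\ref{stlk}. You instead make that ``similar argument'' unnecessary by proving a block-decomposition lemma: since every column of a matrix in $\g(n)$ is congruent to the standard vector indexing its position mod $2$, any extension of $\{e_1,\dots,e_k,v\}$ has the form $\bigl(\begin{smallmatrix} I_k & * \\ 0 & A' \end{smallmatrix}\bigr)$ with $A'\in\g(n-k)$, and conversely any $A'\in\g(n-k)$ together with an arbitrary even upper-right block lifts; this identifies vertices and edges of the link with vertices and edges of $\gb_{n-k}(\Z)$ up to the (nontrivial) fibers of the coordinate projection $\pi$, and you correctly patch the fiber ambiguity with the auxiliary-vertex step when $\pi(v)=\pi(w)$. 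Your reduction buys a genuinely citable black-box use of Lemma~\ref{gbnz0} (applied to $\gb_{n-k}(\Z)$, which is where the hypothesis $n-k\geq2$ enters) at the cost of the structural lemma, whereas the paper's approach avoids the structural lemma but leaves the reader to verify that the reduction argument of Lemma~\ref{gbnz0} really does stay inside the link. Both are valid; yours is the more self-contained of the two, and the block-decomposition observation is exactly the ``hidden obstruction'' check you flagged --- there is none, since the only constraints in $\g(n)$ are parity and determinant, both of which are preserved by your choice of lift.
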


\begin{proof}
By an argument similar to the proof of Lemma~\ref{gbnz0}, we have that
$\lk_{\gb_n(\Z)}(\{e_1,e_2,\dots,e_k\})$ is path connected.
By Lemma~\ref{stlk}, $\lk_{\gb_n(\Z)}(\Delta)$ is also path connected.
\end{proof}

\subsection{Proof of Proposition \ref{gbnz}}\

We suppose $n\geq4$.
Let $\alpha=\{x_i,\{x_i,x_{i+1}\}\mid1\leq{}i\leq{}k,x_{k+1}=x_1\}$ be a
loop on $\gb_n(\Z)$.
We show that $\alpha$ is null-homotopic.

For $v=\sum_{=1}^{n}v_ie_i\in\Z^n$, we define $\R(v)=|v_n|$.
Let $R_{\alpha}=\max\R(x_i)$.

We first prove the next lemma.

\begin{lem}\label{R=0}
For a $1$-simplex $\{v,w\}\in\gb_n(\Z)$ with $\R(v)=\R(w)=0$, we have
 $\{v,w\}\in\lk_{\gb_n(\Z)}(e_n)$.
\end{lem}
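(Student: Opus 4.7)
The plan is to construct an element $B \in \g(n)$ having $v$, $w$, and $e_n$ simultaneously as columns, which is exactly the statement that $\{v, w, e_n\}$ is a $2$-simplex of $\gb_n(\Z)$, i.e., that $\{v, w\} \in \lk_{\gb_n(\Z)}(e_n)$. Let $\pi : \Z^n \to \Z^{n-1}$ denote projection onto the first $n-1$ coordinates. The hypothesis $\R(v) = \R(w) = 0$ gives $v_n = w_n = 0$, so $v$ and $w$ are determined by $\pi(v), \pi(w) \in \Z^{n-1}$. If I can produce $A' \in \g(n-1)$ with $\pi(v)$ as column $i$ and $\pi(w)$ as column $j$ (where $v \equiv e_i$ and $w \equiv e_j \pmod 2$ with $i, j < n$), then the lift $\rho_n(A') \in \g(n)_{e_n}$ from Remark~\ref{sesrem} is a block matrix whose columns at positions $i$, $j$, $n$ are exactly $v$, $w$, and $e_n$. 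So the whole problem reduces to showing $\{\pi(v), \pi(w)\} \in \gb_{n-1}(\Z)$.

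First I would verify that $\pi(v)$ and $\pi(w)$ extend to a $\Z$-basis of $\Z^{n-1}$. Because $\{v, w\} \in \gb_n(\Z) \subset \B_n(\Z)$, the $2 \times 2$ minors of the $n \times 2$ matrix $[v \mid w]$ have gcd $1$; those involving row $n$ vanish since $v_n = w_n = 0$, so the gcd of the $2 \times 2$ minors of $[\pi(v) \mid \pi(w)]$ is $1$ as well. Hence $\pi(v), \pi(w)$ can be completed to a $\Z$-basis $\pi(v), \pi(w), z_{k_1}, \dots, z_{k_{n-3}}$ of $\Z^{n-1}$, with $k_1, \dots, k_{n-3}$ enumerating $\{1, \dots, n-1\} \setminus \{i, j\}$.

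The main obstacle is then arranging the mod-$2$ structure demanded by $\g(n-1)$, namely $z_{k_l} \equiv e_{k_l} \pmod 2$. Modulo $2$, the reductions $\bar z_{k_l}$ descend to a basis of the quotient $\Z_2^{n-1}/\langle \bar e_i, \bar e_j \rangle$; lifting the change-of-basis matrix carrying this basis to $\{\bar e_{k_l}\}$ along the surjection $GL(n-3; \Z) \twoheadrightarrow GL(n-3; \Z_2)$ and applying the lift unimodularly to the $z_{k_l}$ produces new vectors $z'_{k_l}$ with $z'_{k_l} \equiv e_{k_l} + \alpha_l e_i + \beta_l e_j \pmod 2$ for some $\alpha_l, \beta_l \in \{0, 1\}$. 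Finally, since $\pi(v) \equiv e_i$ and $\pi(w) \equiv e_j \pmod 2$, the unimodular substitution $z''_{k_l} = z'_{k_l} - \alpha_l \pi(v) - \beta_l \pi(w)$ yields $z''_{k_l} \equiv e_{k_l} \pmod 2$ while preserving the basis property; the resulting $(n-1) \times (n-1)$ matrix $A'$ with columns $\pi(v), \pi(w), z''_{k_1}, \dots, z''_{k_{n-3}}$ at positions $i, j, k_1, \dots, k_{n-3}$ lies in $\g(n-1)$, and its block embedding $\rho_n(A')$ completes the proof.
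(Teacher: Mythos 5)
Your proof is correct, but it follows a genuinely different route from the paper's. The paper starts from an arbitrary extension $A=(a_1\cdots a_n)\in\g(n)$ of $\{v,w\}$ and runs a Euclidean algorithm on the last-row entries: it repeatedly multiplies $A$ on the right by $E_{nl}^{u}$ or $E_{ln}^{u}$ for columns $l\neq i,j$ (which leaves the columns $v$ and $w$ untouched) to drive the quantity $S_A=\sum_l\R(a_l)$ down to $1$, and then normalizes the last column to $e_n$ by an explicit product of $E_{ln}$'s and possibly $F_n$. You instead project away the vanishing last coordinate, invoke the standard criterion that a pair of integer vectors extends to a $\Z$-basis of $\Z^{n-1}$ iff the gcd of its $2\times2$ minors is $1$, and then repair the mod-$2$ congruences of the completing vectors using surjectivity of $GL(m;\Z)\to GL(m;\Z_2)$ followed by a final column correction by $\pi(v)$ and $\pi(w)$; the block embedding into $\g(n)_{e_n}$ then exhibits $\{v,w,e_n\}$ as a $2$-simplex. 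Your argument is conceptually cleaner in that it reduces the lemma to a standard completion statement and avoids the descent on $S_A$, at the cost of importing the minor-gcd criterion and the mod-$2$ lifting step; the paper's argument is more self-contained and stays entirely within explicit manipulations by the generators $E_{ij}$, $F_i$, which matches the computational style of the rest of the paper. Both correctly establish $\{v,w\}\in\lk_{\gb_n(\Z)}(e_n)$.
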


\begin{proof}
Note that $v\not\equiv{}w\mod2$.
Suppose that $v\equiv{}e_i$, $w\equiv{}e_j\mod2$ and $i<j$.
Since $\R(v)=\R(w)=0$, we have that $v,w\not\equiv{}e_n\mod2$.
There exists an extension $A=(a_1a_2\cdots{}a_n)\in\g(n)$ of $\{v,w\}$.
Let $S_A=\sum_{l=1}^{n}\R(a_l)$.
Note that $S_A$ is odd.

First, we consider the case $S_A=1$.
Note that $\R(a_l)=0$ for $1\leq{}l\leq{}n-1$ and $\R(a_n)=1$.
Put $a_n=\sum_{i=1}^{n-1}2b_ie_i+\varepsilon{e_n}$, where
 $\varepsilon=\pm1$.
Let
$B=E_{1n}^{b_1}E_{2n}^{b_2}\cdots{E_{n-1n}^{b_{n-1}}}F_n^{\frac{\varepsilon-1}{2}}$.
Then we have $BA=(a_1\cdots{}a_{n-1}e_n)$.
Hence, we have that $\{v,w\}=\{a_i,a_j\}\in\lk_{\gb_n(\Z)}(e_n)$.

Next, we suppose $S_A\geq3$.
Note that there exists $1\leq{}l\leq{}n-1$ with $l\neq{i,j}$ such that
$\R(a_l)\neq0$.
If $\R(a_l)>\R(a_n)$, there exists an integer $u\in\Z$ such that
$\R(a_l+2ua_n)<\R(a_n)$.
Then we have that $AE_{nl}^u$ is an extension of $\{v,w\}$ and that
$S_{AE_{nl}^u}<S_A$.
Similarly, if $\R(a_l)<\R(a_n)$, there exists an integer $u'\in\Z$ such
 that $\R(a_l)>\R(a_n+2u'a_l)$.
Then we have that $AE_{ln}^{u'}$ is an extension of $\{v,w\}$ and that
$S_{AE_{ln}^{u'}}<S_A$.
Repeating this operation, we conclude that there exists an extension
 $A'\in\g(n)$ of $\{v,w\}$ such that $S_{A'}=1$.
Therefore, we have $\{v,w\}\in\lk_{\gb_n(\Z)}(e_n)$.
Thus, we obtain the claim.
\end{proof}

When $R_{\alpha}=0$, by this lemma, we have
$\{x_i,x_{i+1}\}\in\lk_{\gb_n(\Z)}(e_n)$.
Namely, the loop $\alpha$ is in $\lk_{\gb_n(\Z)}(e_n)$.
Since $\lk_{\gb_n(\Z)}(e_n)$ is the subcomplex of
$\st_{\gb_n(\Z)}(e_n)$ and $\st_{\gb_n(\Z)}(e_n)$ is contractible,
$\alpha$ is null-homotopic.
Therefore, we next assume $R_{\alpha}>0$.

Suppose that $R_{\alpha}$ is odd.
There exists $1\leq{}i\leq{}k$ such that $\R(x_i)=R_{\alpha}$.
Since $R_{\alpha}$ is odd, we have that
$x_i\equiv{}e_n$,
$x_{i\pm1}\not\equiv{}e_n\mod2$ and
$\R(x_{i\pm1})<R_{\alpha}$.
By Corollary~\ref{lk-path}, we have that $\lk_{\gb_n(\Z)}(x_i)$ is path
connected.
Since $x_{i\pm1}\in\lk_{\gb_n(\Z)}(x_i)$, there exists a path
$\{y_j,y_l,\{y_j,y_{j+1}\}\mid1\leq{}j\leq{}l-1\}$ on
$\lk_{\gb_n(\Z)}(x_i)$ between $x_{i-1}$ and $x_{i+1}$ such that
$y_1=x_{i-1}$ and $y_l=x_{i+1}$ (see Figure~\ref{odd}).
Since $R_{\alpha}$ is odd and $\R(y_j)$ is even for each $y_j$, there
exists an integer $s_j\in\Z$ such that $\R(y_j')<R_{\alpha}$, where
$y_j'=y_j+2s_jx_i$.
We choose $s_j=0$ if $\R(y_j)<R_{\alpha}$.
When $y_j\equiv{e_t}$, $y_{j+1}\equiv{e_u}\mod2$, for an extension
$A\in\g(n)$ of $\{x_i,y_j,y_{j+1}\}$, we have that
$\{x_i,y_j',y_{j+1}'\}=\{AE_{nt}^{s_j}E_{nu}^{s_{j+1}}e_n,AE_{nt}^{s_j}E_{nu}^{s_{j+1}}e_t,AE_{nt}^{s_j}E_{nu}^{s_{j+1}}e_u\}$.
Hence $\{x_i,y_j',y_{j+1}'\}$ is a $2$-simplex which has an extension
$AE_{nt}^{s_j}E_{nu}^{s_{j+1}}$.
Therefore we have that the path
$\{y_j',y_l',\{y_j',y_{j+1}'\}\mid1\leq{}j\leq{}l-1\}$
between $x_{i-1}$ and $x_{i+1}$ is in $\lk_{\gb_n(\Z)}(x_i)$ (see
Figure~\ref{odd}).
Let
$\alpha'=\alpha\cup\{y_j',y_l',\{y_j',y_{j+1}'\}\mid1\leq{}j\leq{}l-1\}\setminus\{x_i,\{x_i,x_{i\pm1}\}\}$.
Then $\alpha'$ is homotopic to $\alpha$ (see Figure~\ref{odd}).
For all $x_i$ with $\R(x_i)=R_{\alpha}$, applying the same operation, we
conclude that $R_{\beta}<R_{\alpha}$, where $\beta$ is a resulting loop
which is homotopic to $\alpha$.

\begin{figure}[h]
\includegraphics[scale=0.5]{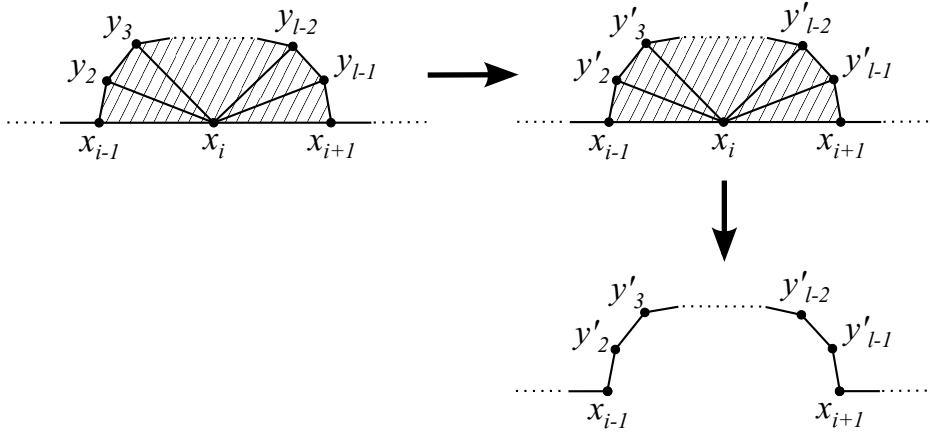}
\caption{The case $R_{\alpha}$ is odd.}\label{odd}
\end{figure}

Next, suppose that $R_{\alpha}$ is even.
There exists $1\leq{}i\leq{}k$ such that $\R(x_i)=R_{\alpha}$.
Since $R_{\alpha}$ is even, we have $x_i\not\equiv{}e_n\mod2$.

\begin{rem}\label{suppose}
Under the assumption $n\geq4$, we may suppose that $\alpha$ satisfies
 all of the following conditions.
\begin{itemize}
 \item $\R(x_{i\pm1})<R_{\alpha}$,
 \item $x_{i\pm1}\not\equiv{}e_n\mod2$,
 \item $x_{i-1}\not\equiv{}x_{i+1}\mod2$.
\end{itemize}
\end{rem}

\begin{proof}
Without loss of generality, we suppose that $x_i\equiv{e_1}\mod2$.
\begin{itemize}
 \item Suppose that $\R(x_{i-1})=R_{\alpha}$.
       Since $R_{\alpha}$ is even we have
       $x_{i-1}\not\equiv{e_n}\mod2$.
       Without loss of generality, we suppose that
       $x_{i-1}\equiv{e_2}\mod2$.
       There exists an extension $A\in\g(n)$ of $\{x_i,x_{i-1}\}$ such
       that $\R(Ae_n)<R_{\alpha}$.
       In fact, if $\R(Ae_n)>R_{\alpha}$, there is an integer $u\in\Z$
       such that $\R(AE_{1n}^ue_n)<R_{\alpha}$.
       Then we choose $AE_{1n}^u$ in place of $A$ as an extension of
       $\{x_i,x_{i-1}\}$.
       (Note that $\R(Ae_n)$ and $\R(AE_{1n}^ue_n)$ are not equal to
       $R_{\alpha}$, since these are odd.)
       Let $y=Ae_n$, and let
       $\alpha'=\alpha\cup\{y,\{x_{i-1},y\},\{y,x_i\}\}\setminus\{\{x_{i-1},x_i\}\}$.
       Then $\alpha'$ is homotopic to $\alpha$.
       Hence, considering $\alpha'$ in place of $\alpha$, we may suppose
       $\R(x_{i-1})<R_{\alpha}$.
       Similarly, we may suppose $\R(x_{i+1})<R_{\alpha}$.
 \item Suppose that $x_{i-1}\equiv{}e_n\mod2$.
       Since $\R(x_{i-1})$ is odd we have $\R(x_{i-1})<R_{\alpha}$.
       There exists an extension $A\in\g(n)$ of $\{x_i,x_{i-1}\}$ such
       that $\R(Ae_2)<\R(x_{i-1})(<R_{\alpha})$.
       In fact, if $\R(Ae_2)>\R(x_{i-1})$, there is an integer $u\in\Z$
       such that $\R(AE_{n2}^ue_2)<\R(x_{i-1})$.
       Then we choose $AE_{n2}^u$ in place of $A$ as an extension of
       $\{x_i,x_{i-1}\}$.
       (Note that $\R(Ae_2)$ and $\R(AE_{n2}^ue_2)$ are not equal to
       $\R(x_{i-1})$, since these are even.)
       Let $y=Ae_2$, and let
       $\alpha'=\alpha\cup\{y,\{x_{i-1},y\},\{y,x_i\}\}\setminus\{\{x_{i-1},x_i\}\}$.
       Then $\alpha'$ is homotopic to $\alpha$.
       Hence, considering $\alpha'$ in place of $\alpha$, we may suppose
       $\R(x_{i-1})<R_{\alpha}$ and $x_{i-1}\not\equiv{}e_n\mod2$.
       Similarly, we may suppose $\R(x_{i+1})<R_{\alpha}$ and
       $x_{i+1}\not\equiv{}e_n\mod2$.
 \item Suppose that $\R(x_{i\pm1})<R_{\alpha}$,
       $x_{i\pm1}\not\equiv{}e_n\mod2$ and
       $x_{i-1}\equiv{}x_{i+1}\mod2$.
       Without loss of generality, we suppose that
       $x_{i\pm1}\equiv{e_2}\mod2$.
       There exists an extension $A\in\g(n)$ of $\{x_i,x_{i-1}\}$ such
       that $\R(Ae_3)\leq\R(x_{i-1})(<R_{\alpha})$.
       In fact, if $\R(Ae_3)>\R(x_{i-1})$, there is an integer $u\in\Z$
       such that $\R(AE_{23}^ue_3)\leq\R(x_{i-1})$.
       Then we choose $AE_{23}^u$ in place of $A$ as an extension of
       $\{x_i,x_{i-1}\}$.
       (Since $Ae_3\not\equiv{x_i,x_{i\pm1},e_n}\mod2$, we need the
       assumption $n\geq4$.)
       Let $y=Ae_3$, and let
       $\alpha'=\alpha\cup\{y,\{x_{i-1},y\},\{y,x_i\}\}\setminus\{\{x_{i-1},x_i\}\}$.
       Then $\alpha'$ is homotopic to $\alpha$.
       Hence, considering $\alpha'$ in place of $\alpha$, we may suppose
       that 
       $\R(x_{i\pm1})<R_{\alpha}$,
       $x_{i\pm1}\not\equiv{}e_n\mod2$ and
       $x_{i-1}\not\equiv{}x_{i+1}\mod2$.
\end{itemize}
\end{proof}

We now suppose that $\alpha$ satisfies the conditions of the above
remark.
Suppose that $x_i\equiv{}e_s$, $x_{i-1}\equiv{}e_t$ and
$x_{i+1}\equiv{}e_u\mod2$, where $s$, $t$ and $u$ are mutually different
 and not equal to $n$.
Since $\{x_{i-1},x_i\}$ is a $1$-simplex in $\gb_n(\Z)$, there is an
extension $B\in\g(n)$ of $\{x_{i-1},x_i\}$.
We write $B^{-1}x_{i+1}=\sum_{j=1}^{n}a_je_j$.
It follows that there exist an even integer $b_u$ and an odd integer
$b_n$ such that $a_ub_n-a_nb_u=\gcd(a_u,a_n)$.
Then we have that
$$
\left(
\begin{array}{cc}
a_u/{\gcd(a_u,a_n)}&b_u\\
a_n/{\gcd(a_u,a_n)}&b_n
\end{array}
\right)^{-1}
\left(
\begin{array}{c}
a_u\\
a_n
\end{array}
\right)=
\left(
\begin{array}{c}
\gcd(a_u,a_n)\\
0
\end{array}
\right).
$$
Let $C\in\g(n)$ be the matrix whose
$(u,u)$ entry is $a_u/{\gcd(a_u,a_n)}$,
$(n,u)$ entry is $a_n/{\gcd(a_u,a_n)}$,
$(u,n)$ entry is $b_u$,
$(n,n)$ entry is $b_n$,
other diagonal entries are $1$ and
other entries are $0$.
Then if we set $A=C^{-1}B^{-1}$, it follows that
$Ax_i=e_s$,
$Ax_{i-1}=e_t$ and
$\R(Ax_{i+1})=0$.

Since $\{e_s,Ax_{i+1}\}$ is a $1$-simplex and $\R(e_s)=\R(Ax_{i+1})=0$,
by Lemma~\ref{R=0}, we have that
$\{e_s,Ax_{i+1}\}\in\lk_{\gb_n(\Z)}(e_n)$.
Therefore, we have that $e_n\in\lk_{\gb_n(\Z)}(\{e_s,Ax_{i+1}\})$.
In addition, it is clear that $e_n\in\lk_{\gb_n(\Z)}(\{e_s,e_t\})$.
Hence, we have that $A^{-1}e_n\in\lk_{\gb_n(\Z)}(\{x_i,x_{i\pm1}\})$
(see Figure~\ref{even}).
Then, there exists an integer $l$ such that $\R(x_i')<R_{\alpha}$, where
$x_i'=A^{-1}e_n+2lx_i$.
We have also that $x_i'\in\lk_{\gb_n(\Z)}(\{x_i,x_{i\pm1}\})$
(see Figure~\ref{even}).
Let
$\alpha'=\alpha\cup\{\{x_i'\},\{x_i',x_{i\pm1}\}\}\setminus\{x_i,\{x_i,x_{i\pm1}\}\}$.
Then $\alpha'$ is homotopic to $\alpha$ (see Figure~\ref{even}).
Similar to the case $R_{\alpha}$ is odd, for all $x_i$ with
$\R(x_i)=R_{\alpha}$, applying the same operation, we conclude that
$R_{\beta}<R_{\alpha}$, where $\beta$ is a resulting loop which is
homotopic to $\alpha$.

\begin{figure}[h]
\includegraphics[scale=0.5]{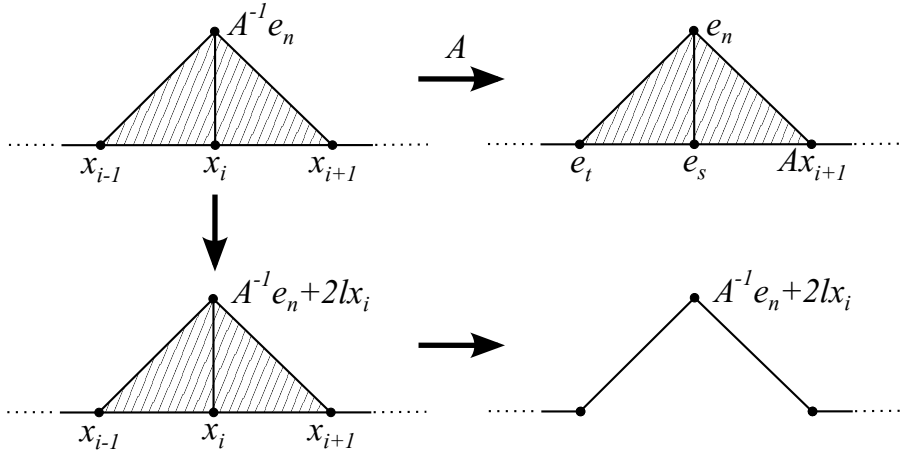}
\caption{The case $R_{\alpha}$ is even.}\label{even}
\end{figure}

Repeating this operation until $R_{\alpha}=0$, we conclude that the loop
$\alpha$ on $\gb_n(\Z)$ is null homotopic.
Thus, $\gb_n(\Z)$ is simply connected.

\section{Proof of Theorem \ref{thm}}\label{proof}

We first prove the next proposition.

\begin{lem}\label{gnZ}
For any $n\geq4$, $\g(n)$ is isomorphic to the quotient of
$\underset{1\leq{i}\leq{n}}{\ast}\g(n)_{e_i}$ by the normal subgroup
 generated by edge relators.
\end{lem}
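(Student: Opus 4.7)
The plan is to apply Theorem~\ref{Brown} to the action of $\g(n)$ on the simplicial complex $\gb_n(\Z)$, exactly mirroring the proof of Lemma~\ref{g(3)2} in Section~\ref{n=3}, but now using $\gb_n(\Z)$ in place of $\B_3(\Z)$. Proposition~\ref{gbnz} guarantees that $\gb_n(\Z)$ is simply connected for $n\geq 4$, so the hypothesis of Theorem~\ref{Brown} is met, provided that the action is without rotation. To verify the latter, note that any simplex $\{v_1,\dots,v_k\}\in\gb_n(\Z)$ consists of distinct columns of some $A\in\g(n)$, which reduce mod $2$ to $k$ distinct standard vectors. If $g\in\g(n)$ stabilizes the simplex, then $g$ permutes the $v_i$, but since $g\equiv I\pmod 2$ we have $gv_i\equiv v_i\pmod 2$, forcing $gv_i=v_i$.

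Next I would identify the orbit data. Since $\g(n)$ is the kernel of reduction mod~$2$, two vertices $v,w\in\gb_n(\Z)$ are in the same orbit if and only if $v\equiv w\pmod 2$; hence the vertex orbits are represented by $e_1,\dots,e_n$. For every pair $i<j$ (resp.\ triple $i<j<k$), the identity matrix provides an extension showing that $\{e_i,e_j\}$ is a $1$-simplex (resp.\ $\{e_i,e_j,e_k\}$ is a $2$-simplex) of $\gb_n(\Z)$. I would then choose
\begin{itemize}
\item $V=\{e_1,\dots,e_n\}$,
\item $T=\{(e_1,e_i)\mid 2\leq i\leq n\}\cup V$,
\item $E=\{(e_i,e_j)\mid 1\leq i<j\leq n\}$,
\item $F=\{(e_i,e_j,e_k)\mid 1\leq i<j<k\leq n\}$,
\end{itemize}
so that $w(e)=t(e)$ for every $e\in E$, and we may take $g_e=1$ for all $e\in E$. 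Writing $\hat{g}_{ij}$ for the formal letter attached to $(e_i,e_j)$, Theorem~\ref{Brown} presents $\g(n)$ as a quotient of $\bigl(\ast_{i}\g(n)_{e_i}\bigr)\ast\bigl(\ast_{i<j}\langle\hat{g}_{ij}\rangle\bigr)$ by three families of relators: the tree relators $\hat{g}_{1i}=1$, the edge relators $\hat{g}_{ij}^{-1}X_{e_i}\hat{g}_{ij}X_{e_j}^{-1}$ for $X\in\g(n)_{(e_i,e_j)}$, and, because $g_\tau=1$ for each $\tau\in F$, the $2$-simplex relators $\hat{g}_{ij}\hat{g}_{jk}\hat{g}_{ik}^{-1}=1$.

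The key step is then to eliminate every $\hat{g}_{ij}$. From the tree relators one has $\hat{g}_{1i}=1$ for all $i\geq 2$, and applying the $2$-simplex relator for $\tau=(e_1,e_j,e_k)$ with $2\leq j<k\leq n$ (which lies in $F$ by the observation above) gives $\hat{g}_{jk}=\hat{g}_{1j}^{-1}\hat{g}_{1k}=1$. Hence all letters $\hat{g}_{ij}$ become trivial, and the remaining edge relators simplify to $X_{e_i}X_{e_j}^{-1}=1$ for $X\in\g(n)_{(e_i,e_j)}$, which is exactly the definition of an edge relator in the statement. This yields the claimed presentation.

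Everything here is essentially forced by Theorem~\ref{Brown} once the orbit representatives are chosen well, so I do not expect a real obstacle beyond making sure the three verifications are in place: simple connectivity of $\gb_n(\Z)$ (supplied by Proposition~\ref{gbnz}), the without-rotation property, and the presence of the $2$-simplices $\{e_1,e_j,e_k\}$ needed to kill the edge generators $\hat g_{jk}$. The main conceptual point — and the reason $\gb_n(\Z)$ is preferable to $\B_n(\Z)$ in this induction step — is that the orbit space has only $n$ vertices, so that all off-tree edge letters can be eliminated by a single family of $2$-simplex relations anchored at $e_1$.
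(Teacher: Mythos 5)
Your proposal is correct and follows essentially the same route as the paper: apply Theorem~\ref{Brown} to the action on $\gb_n(\Z)$ (simply connected by Proposition~\ref{gbnz}), choose $V=\{e_1,\dots,e_n\}$ with $T$, $E$, $F$ as stated, and kill all $\hat g_{ij}$ via the tree relators together with the $2$-simplex relators for the triangles $(e_1,e_j,e_k)$. Your explicit verifications of the without-rotation property and of the orbit identification are details the paper only asserts, but the argument is the same.
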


\begin{proof}
For a $(k-1)$-simplex $\Delta=\{x_1,x_2,\dots,x_k\}\in\gb_n(\Z)$ with
 $x_j\equiv{}e_{i(j)}\mod2$, let $A\in\g(n)$ be an extension of
 $\Delta$.
Then we have $A^{-1}\cdot\Delta=\{e_{i(1)},e_{i(2)},\dots,e_{i(k)}\}$.
Therefore, we have
$$
\g(n)\backslash\gb_n(\Z)=\{\{e_{i(1)},e_{i(2)},\dots,e_{i(k)}\}\mid1\leq{}k\leq{}n,1\leq{}i(1)<i(2)<\cdots<i(k)\leq{}n\}.
$$
It is clear that $\g(n)\backslash\gb_n(\Z)$ is contractible.
Note that the action of $\g(n)$ on $\gb_n(\Z)$ is without rotation.

We first set followings.
\begin{itemize}
 \item $T=\{(e_1,e_i)\mid2\leq{}i\leq{}n\}$.
 \item $E=\{(e_i,e_j)\mid1\leq{}i<j\leq{}n\}$.
 \item $F=\{(e_i,e_j,e_k)\mid1\leq{}i<j<k\leq{}n\}$.
 \item For $e\in{}E$, we choose $g_e=1$, and write $g_e=g_{ij}$ when
       $e=(e_i,e_j)$.
 \item For $\tau=(e_i,e_j,e_k)\in{}F$, let
       $g_{\tau}=g_{ij}g_{jk}g_{ik}^{-1}$.
\end{itemize}
Then, since $\gb_n(\Z)$ is simply connected, it follows from
 Theorem~\ref{Brown} that $\g(n)$ is isomorphic to the quotient of
$\left(\underset{1\leq{}i\leq{}n}{\ast}\g(n)_{e_i}\right)\ast\left(\underset{1\leq{}i<j\leq{}n}{\ast}\la\hat{g}_{ij}\ra\right)$
 by the normal subgroup generated by followings
\begin{enumerate}
 \item $\hat{g}_{1i}$, where $2\leq{}i\leq{}n$,
 \item $\hat{g}_{ij}^{-1}X_{e_i}\hat{g}_{ij}X_{e_j}^{-1}$, where
       $1\leq{}i<j\leq{}n$ and $X\in\g(n)_{(e_i,e_j)}$,
 \item $\hat{g}_{\tau}g_{\tau}^{-1}$, where $\tau\in{}F$.
\end{enumerate}
Since $g_{\tau}=1$, the relation $\hat{g}_{\tau}g_{\tau}^{-1}$ is
 equivalent to the relation $\hat{g}_{ij}\hat{g}_{jk}=\hat{g}_{ik}$ if
 $\tau=(e_i,e_j,e_k)$.
By relations $\hat{g}_{1i}=1$, we have the relation $\hat{g}_{ij}=1$ for
$1\leq{}i<j\leq{}n$.
Thus, we obtain the claim.
\end{proof}

Note that for $e=(e_s,e_t)$, $\g(n)_{e}$ is generated by $(E_{ij})_e$
and $(F_j)_e$ for $1\leq{}i,j\leq{}n$ with $j\neq{}s,t$.
Hence, we have edge relations
\begin{itemize}
 \item $(E_{ij})_{e_s}=(E_{ij})_{e_t}$,
 \item $(F_j)_{e_s}=(F_j)_{e_t}$.
\end{itemize}

Since we already obtained presentations of $\g(2)$ and $\g(3)$, from
Lemma~\ref{gnZ} and Remark~\ref{sesrem}, we obtain the presentation of
$\g(n)$ for $n\geq4$, by induction on $n$.

Thus, we complete the proof of Theorem~\ref{thm}.

\appendix
\section{}\label{appendix}

In this section, we check Tietze transformations of
Subsection~\ref{edge}.

Let $\hg$ denote the quotient of
$\underset{1\leq{}i\leq7}{\ast}\g(3)_{v_i}$ by the normal subgroup
generated by edge relators.
By the edge relations of Subsection~\ref{edge}, we have the following
relations, in $\hg$,
\begin{enumerate}
 \item \begin{itemize}
	\item $\bbb=\aac$,
	\item $\bbc=\aab$,
	\item $\bbf=\aaf$,
       \end{itemize}
 \item \begin{itemize}
	\item $\cca=\bbd$,
	\item $\ccb=\aad$,
	\item $\ccc=\aaa$,
	\item $\ccd=\bba$,
	\item $\cce=\bbe$,
	\item $\ccf=\aae$,
       \end{itemize}
 \item \begin{itemize}
	\item $\dda=\bba\aae\aaa\bbe$,
	\item $\ddb=\aab\aac$,
	\item $\ddc=\aac$,
	\item $\ddd=\bbd^{-1}\aad$,
	\item $\dde=\bba\aae$,
	\item $\ddf=\aaf$,
       \end{itemize}
 \item \begin{itemize}
	\item $\eea=\bbd\aaf\aab\bbe$,
	\item $\eeb=\aaa\aad$,
	\item $\eec=\aad$,
	\item $\eed=\bba^{-1}\aac$,
	\item $\eee=\bbd\aaf$,
	\item $\eef=\aae$,
       \end{itemize}
 \item \begin{itemize}
	\item $\ffa=\aad\aaf\aac\aae$,
	\item $\ffb=\bba\bbd$,
	\item $\ffc=\bbd$,
	\item $\ffd=\aaa^{-1}\aab$,
	\item $\ffe=\aad\aaf$,
	\item $\fff=\bbe$,
       \end{itemize}
 \item \begin{itemize}
	\item $\gga=\bba\aae\aaa\bbe\bbd^{-1}\aad$,
	\item $\ggb=\bbd\aaf\aab\bbe\bba^{-1}\aac$,
	\item $\ggc=\bba^{-1}\aac$,
	\item $\ggd=\bbd^{-1}\aad$,
	\item $\gge=\bba\aae$,
	\item $\ggf=\bbd\aaf$.
       \end{itemize}
\end{enumerate}
Using Tietze transformations, we obtain a presentation of $\hg$ whose
generators are
$\aaa$,
$\aab$,
$\aac$,
$\aad$,
$\aae$,
$\aaf$,
$\bba$,
$\bbd$ and
$\bbe$.
To avoid complication of notations, we rewrite $X=X_{v_i}$.
Then we have a finite presentation of $\hg$ with generators
$\xaaa$,
$\xaab$,
$\xaac$,
$\xaad$,
$\xaae$,
$\xaaf$,
$\xbba$,
$\xbbd$ and
$\xbbe$,
and with the following relators
\begin{enumerate}
 \item[(1.1)] $\xaae^2$,
	      $\xaaf^2$,
 \item[(1.2)] $(\xaaa\xaae)^2$,
	      $(\xaab\xaaf)^2$,
	      $(\xaac\xaae)^2$,
	      $(\xaac\xaaf)^2$,
	      $(\xaad\xaae)^2$,
	      $(\xaad\xaaf)^2$,
	      $(\xaae\xaaf)^2$,
 \item[(1.3)] $[\xaaa,\xaab]$,
	      $[\xaaa,\xaad]$,
	      $[\xaaa,\xaaf]$,
	      $[\xaab,\xaac]$,
	      $[\xaab,\xaae]$,
	      $[\xaac,\xaaa]\xaab^2$,
	      $[\xaad,\xaab]\xaaa^2$,
 \item[(2.1)] $\xbbe^2$,
 \item[(2.2)] $(\xaab\xbbe)^2$,
	      $(\xbba\xbbe)^2$,
	      $(\xbbd\xbbe)^2$,
	      $(\xbbd\xaaf)^2$,
	      $(\xbbe\xaaf)^2$,
 \item[(2.3)] $[\xbba,\xaac]$,
	      $[\xbba,\xbbd]$,
	      $[\xbba,\xaaf]$,
	      $[\xaac,\xbbe]$,
	      $[\xaab,\xbba]\xaac^2$,
	      $[\xbbd,\xaac]\xbba^2$,
 \item[(3.2)] $(\xaaa\xbbe)^2$,
	      $(\xbba\xaae)^2$,
	      $(\xbbe\xaae)^2$,
 \item[(3.3)] $[\xbbd,\xaad]$,
	      $[\xbbd,\xaae]$,
	      $[\xaad,\xbbe]$,
	      $[\xaaa,\xbbd]\xaad^2$,
	      $[\xbba,\xaad]\xbbd^2$,		
 \item[(4.3)] $[\xbbd^{-1}\xaad,\xaab\xaac](\xbba\xaae\xaaa\xbbe)^2$,
 \item[(5.3)] $[\xbba^{-1}\xaac,\xaaa\xaad](\xbbd\xaaf\xaab\xbbe)^2$,
 \item[(6.3)] $[\xaaa^{-1}\xaab,\xbba\xbbd](\xaad\xaaf\xaac\xaae)^2$,
 \item[(7.3)] \begin{enumerate}
	       \item $[\xbba\xaae\xaaa\xbbe\xbbd^{-1}\xaad,\xbbd\xaaf\xaab\xbbe\xbba^{-1}\xaac]$,
	       \item $[\xbba^{-1}\xaac,\xbba\xaae\xaaa\xbbe\xbbd^{-1}\xaad](\xbbd\xaaf\xaab\xbbe\xbba^{-1}\xaac)^2$,
	       \item $[\xbbd^{-1}\xaad,\xbbd\xaaf\xaab\xbbe\xbba^{-1}\xaac](\xbba\xaae\xaaa\xbbe\xbbd^{-1}\xaad)^2$.
	      \end{enumerate}
\end{enumerate}

Let $X$, $Y$ and $Z$ be
\begin{eqnarray*}
X&=&\{(F_iF_j)^2,(E_{ij}F_i)^2,(E_{ij}F_j)^2,[E_{ij},F_k]\mid\{i,j,k\}=\{1,2,3\}\},\\
Y&=&\{[E_{ij},E_{ik}],[E_{ij},E_{kj}]\mid\{i,j,k\}=\{1,2,3\}\},\\
Z&=&\{[E_{ij},E_{ki}]E_{kj}^2\mid\{i,j,k\}=\{1,2,3\}\}.
\end{eqnarray*}
We show that relators (4.3), (5.3), (6.3) and (b), (c) of (7.3) are
obtained from relators $X$, $Y$, $Z$ and (a) of (7.3).
In transformation, the notation ``$\equiv$'' means conjugation.
An underline means applying relators $Y$, $Z$ or (a) of (7.3).

\begin{lem}\label{7.3}
Under relators {\rm (1.-), (2.-), (3.-)} and conjugation,
\begin{enumerate}
 \item the relator {\rm (a)} of {\rm (7.3)} is equivalent to the relator
       $(\eji\eij^{-1}\ekj^{-1}\ejk\eik\eki^{-1})^2$,
 \item relators {\rm (b)} and {\rm (c)} of {\rm (7.3)} are equivalent to
       the relator\\
       $\ekj^{-1}\eij\eji^{-1}\ejk^{-1}\ekj\eij^{-1}\eji\ejk\eik^{-1}\eki\eik^{-1}\eki$,
\end{enumerate}
where $(j,k)=(2,3)$ or $(3,2)$.
\end{lem}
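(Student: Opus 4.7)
The approach is direct computation. Both parts share the same preliminary reduction: the relators $F_p^2$, $(E_{pq}F_p)^2$, $(F_pF_q)^2$, and $[E_{pq},F_r]$ (for $r\notin\{p,q\}$) contained in $(1.\cdot)$, $(2.\cdot)$, $(3.\cdot)$ imply that conjugation by $F_p$ acts on each $E_{ab}$ as an involution: it inverts $E_{ab}$ when $p\in\{a,b\}$ and fixes it otherwise. Since the same relators also force the $F_p$'s to commute pairwise, conjugation by any product $F_pF_q$ acts on each $E_{ab}$ by the analogous explicit rule. Using this, I first rewrite
\begin{align*}
P &:= \eji\ffj\eij\ffi\eki^{-1}\ekj = P'\cdot\ffj\ffi,\\
Q &:= \eki\ffk\eik\ffi\eji^{-1}\ejk = Q'\cdot\ffk\ffi,
\end{align*}
where $P'$ and $Q'$ are explicit words in the $E_{ab}^{\pm1}$ obtained by pushing each $F_p$ to the right past the intervening $E$'s.

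For part (1), I expand $[P,Q]=P^{-1}Q^{-1}PQ$ and push the $F$-factors through the interior $E$-words. Every $F_p$ then appears an even number of times, so the relators $F_p^2=(F_pF_q)^2=1$ collapse the $F$-part to the identity, leaving a pure $E$-word. I then apply the $E$-commutation relators of $(1.3)$, $(2.3)$, $(3.3)$, namely $[\eij,\eik]$, $[\eij,\ekj]$, $[\eij,\eki]\ekj^2$ and their permutations, to reduce this word to $(\eji\eij^{-1}\ekj^{-1}\ejk\eik\eki^{-1})^2$. The alternative choice $(j,k)=(3,2)$ is automatic, since $[P,Q]$ and $[Q,P]=[P,Q]^{-1}$ are equivalent as relators, and swapping the roles of $j$ and $k$ throughout the calculation produces the corresponding expression.

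For part (2), the squared term reduces at once: $Q^2=Q'\,\ffk\ffi\,Q'\,\ffk\ffi=Q'\cdot\tau(Q')$, where $\tau$ is conjugation by $\ffk\ffi$, fixing $\eik,\eki$ and inverting $\eij,\eji,\ejk,\ekj$; an analogous formula holds for $P^2$ via conjugation by $\ffj\ffi$. The commutators $[\eji^{-1}\ejk,P]$ and $[\eki^{-1}\ekj,Q]$ are handled by the same $F$-pushing procedure, and their products with $Q^2$ and $P^2$ respectively simplify, using the $E$-commutation relators exactly as in part (1), to the target word $\ekj^{-1}\eij\eji^{-1}\ejk^{-1}\ekj\eij^{-1}\eji\ejk\eik^{-1}\eki\eik^{-1}\eki$.

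The principal obstacle is bookkeeping. Each application of a relator such as $[\eij,\eki]\ekj^2$ introduces an $\ekj^{\pm2}$ correction that must eventually be absorbed into the final expression; scheduling the rewrites so that all such corrections either cancel in pairs or coalesce into the prescribed word is the delicate part. No individual step is deep, which is why the verification is relegated to this appendix rather than the main text.
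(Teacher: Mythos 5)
Your strategy coincides with the paper's: eliminate every $F_p$ by conjugating it rightward through the word using $(E_{pq}F_p)^2$, $(E_{pq}F_q)^2$, $[E_{pq},F_r]$, $F_p^2$ and $(F_pF_q)^2$ (so that the $F$-tail, containing each $F_p$ an even number of times, collapses), and then reduce the resulting pure $E$-word with the relators $[E_{ij},E_{ik}]$, $[E_{ij},E_{kj}]$ and $[E_{ij},E_{ki}]E_{kj}^2$. The one shortfall is that the explicit chain of rewrites producing $(E_{j1}E_{1j}^{-1}E_{kj}^{-1}E_{jk}E_{1k}E_{k1}^{-1})^2$ and the target word of part (2) -- which is the entire content of the lemma and occupies the whole of the paper's proof -- is asserted rather than exhibited, so what you have is a correct plan matching the paper's rather than a completed verification.
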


\begin{proof}
\begin{enumerate}
 \item At first, we delete words $F_1$, $F_2$ and $F_3$, using relators
       $X$, and then transform as follows.
       \begin{eqnarray*}
	& &[\eji\ffj\eij\ffi\eki^{-1}\ekj,\eki\ffk\eik\ffi\eji^{-1}\ejk]\\
        &=&(\eji\ffj\eij\ffi\underset{Y}{\underline{\eki^{-1}\ekj)
	   (\eki}}\ffk\eik\ffi\eji^{-1}\ejk)\\
	& &\cdot(\ekj^{-1}\eki\ffi\eij^{-1}\ffj\underset{Y}{\underline{\eji^{-1})
	   (\ejk^{-1}\eji}}\ffi\eik^{-1}\ffk\eki^{-1})\\
	&\underset{X}{=}&\eji\eij^{-1}\ekj^{-1}\cdot
	   \underset{Y}{\underline{\eik\eji\ejk}}\cdot
	   \ekj^{-1}\eki^{-1}\eij^{-1}\cdot
	   \ejk\eik\eki^{-1}\\
	&=&\eji\eij^{-1}\ekj^{-1}\cdot
	   \ejk\eik\underset{Y}{\underline{\eji\cdot
	   \ekj^{-1}\eki^{-1}}}\eij^{-1}\cdot
	   \ejk\eik\eki^{-1}\\
	&=&\eji\eij^{-1}\ekj^{-1}\cdot
	   \ejk\eik\eki^{-1}\eji\cdot
	   \underset{Y}{\underline{\ekj^{-1}\eij^{-1}}}\cdot
	   \ejk\eik\eki^{-1}\\
	&=&\eji\eij^{-1}\ekj^{-1}
	   \ejk\eik\eki^{-1}\cdot\eji
	   \eij^{-1}\ekj^{-1}
	   \ejk\eik\eki^{-1}\\
	&=&(\eji\eij^{-1}\ekj^{-1}\ejk\eik\eki^{-1})^2.
       \end{eqnarray*}
       Thus, we obtain the claim.
 \item Similarly, we delete words $F_1$, $F_2$ and $F_3$ as follows.
       \begin{eqnarray*}
        & &[\eji^{-1}\ejk,\eji\ffj\eij\ffi\eki^{-1}\ekj]\\
	&=&\ejk^{-1}\eji\cdot\underset{Y}{\underline{\ekj^{-1}\eki}}\ffi\eij^{-1}\ffj\eji^{-1}\cdot
	   \underset{Y}{\underline{\eji^{-1}\ejk\cdot\eji}}\ffj\eij\ffi\eki^{-1}\ekj\\
	&\underset{X}{=}&\ejk^{-1}\eji\cdot\eki\ekj^{-1}\eij\eji^{-1}\cdot\ejk^{-1}\cdot\eij^{-1}\eki^{-1}\ekj,\\
        & &(\eki\ffk\eik\ffi\eji^{-1}\ejk)^2\\
	&=&\eki\ffk\eik\ffi\eji^{-1}\ejk\cdot\eki\ffk\eik\ffi\eji^{-1}\ejk\\
	&\underset{X}{=}&\eki\eik^{-1}\eji\ejk^{-1}\cdot\eki\eik^{-1}\eji^{-1}\ejk.
       \end{eqnarray*}
       We next calculate
       \begin{eqnarray*}
	& &[\eji^{-1}\ejk,\eji\ffj\eij\ffi\eki^{-1}\ekj](\eki\ffk\eik\ffi\eji^{-1}\ejk)^2\\
	&=&\ejk^{-1}\eji\eki\ekj^{-1}\eij\eji^{-1}\ejk^{-1}\underset{Y}{\underline{\eij^{-1}\eki^{-1}\ekj
	   \cdot\eki}}\;\underset{Z}{\underline{\eik^{-1}\eji\ejk^{-1}}}\\
	& &\cdot\eki\eik^{-1}\eji^{-1}\ejk\\
	&\equiv&\ekj^{-1}\eij\eji^{-1}\ejk^{-1}\ekj\eij^{-1}\eji\ejk\eik^{-1}\eki\eik^{-1}\eki.
       \end{eqnarray*}
       Thus, we obtain the claim.
\end{enumerate}
\end{proof}

\begin{prop}\label{7.3(b)(c)}
Each of relators {\rm (b)} and {\rm (c)} of {\rm (7.3)} is obtained from
 relators $X$, $Y$, $Z$ and {\rm (a)} of {\rm (7.3)}.
\end{prop}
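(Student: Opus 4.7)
By Lemma 7.3, modulo the relators in $X\cup Y\cup Z$ and conjugation, relator (a) of (7.3) is equivalent to $w^2=1$ with $w=\eji\eij^{-1}\ekj^{-1}\ejk\eik\eki^{-1}$, and each of relators (b) and (c) is equivalent to $r=1$ with
\[r=\ekj^{-1}\eij\eji^{-1}\ejk^{-1}\ekj\eij^{-1}\eji\ejk\,\eik^{-1}\eki\eik^{-1}\eki,\]
the cases $(j,k)=(2,3)$ and $(3,2)$ corresponding to (b) and (c) respectively. Hence it is enough to derive $r=1$ from $X$, $Y$, $Z$ and $w^2=1$.

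The plan is an explicit Tietze-style reduction in the spirit of the proof of Lemma 7.3. The available tools are the pure commutations supplied by $Y$ (such as $[\eij,\ekj]=1$, $[\eji,\eki]=1$, $[\ejk,\eik]=1$) together with the six non-trivial commutator identities from $Z$ of the form $[\eij,\eki]=\ekj^{-2}$ and their cyclic variants in the indices $i,j,k$. Using $Y$ I would rearrange the first eight-letter block $\ekj^{-1}\eij\eji^{-1}\ejk^{-1}\ekj\eij^{-1}\eji\ejk$ of $r$, while applying $Z$ to absorb the resulting three-index commutator defects, until this block takes the shape of a conjugate of $w^2$. The correction terms accumulated along the way should combine with the trailing block $\eik^{-1}\eki\eik^{-1}\eki$ to cancel, so that $r$ is reduced to a conjugate of $w^2$ times a word that is trivial under $Y$. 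Applying relator (a) then gives $r=1$. Since both (b) and (c) reduce to $r=1$ (for the two choices of $(j,k)$), the proposition follows.

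The main technical obstacle is that the two-indexed commutators $[\ejk,\ekj]$ and $[\eik,\eki]$ are nontrivial in $\g(3)$ and do not appear among the relators in $X\cup Y\cup Z$; one cannot interchange $\ejk$ with $\ekj$, or $\eik$ with $\eki$, using only these relators. This is precisely why (a) is needed for the derivation: the extra relation supplied by $w^2=1$ is what allows these two-indexed exchanges to be carried out. The reduction must therefore be organized so that every such two-indexed exchange is packaged inside the $w^2$ factor; the substitution $w^2=1$ then disposes of them, leaving only three-indexed commutators, which are handled by $Y$ and $Z$.
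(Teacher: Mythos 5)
Your reduction via Lemma~\ref{7.3} to the single word
$r=\ekj^{-1}\eij\eji^{-1}\ejk^{-1}\ekj\eij^{-1}\eji\ejk\eik^{-1}\eki\eik^{-1}\eki$,
and your identification of the real obstacle (the commutators $[\ejk,\ekj]$ and $[\eik,\eki]$ are not among the relators $X\cup Y\cup Z$, which is exactly why relator (a) must enter), both coincide with what the paper does. But your proposal stops at the plan. The entire mathematical content of this proposition is the explicit chain of rewritings showing that $r$ is a consequence of $X$, $Y$, $Z$ and $w^2$; you have only asserted that the correction terms ``should combine with the trailing block to cancel,'' and that assertion is precisely what has to be proved. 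It is not automatic: the paper's derivation starts from $1=w^2$, applies the $Z$ relators at about eight specific positions, at one point inserts a spurious factor $\eji\eij^{-1}\eij\eji^{-1}$ so that relator (a) can be invoked a \emph{second} time to annihilate a six-letter conjugated prefix, and finally conjugates by $\ffk$ and uses $X$ to flip the exponents into the form required by Lemma~\ref{7.3}(2). None of these moves is determined by your outline, and without performing them there is no guarantee that the $Z$-defects accumulated while commuting letters past one another actually cancel against $\eik^{-1}\eki\eik^{-1}\eki$ rather than leaving a residual word. As written, the proposal is a correct strategy with the proof's core computation missing; to complete it you must exhibit the explicit sequence of equalities from $w^2=1$ to $r=1$ (or some other verifiable argument that $r$ lies in the normal closure of $X\cup Y\cup Z\cup\{w^2\}$).
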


\begin{proof}
Let $(j,k)=(2,3)$ or $(3,2)$.
We calculate
\begin{eqnarray*}
1&=&\eji\eij^{-1}\ekj^{-1}\underset{Y}{\underline{\ejk\eik}}\eki^{-1}\cdot
   \eji\eij^{-1}\ekj^{-1}\ejk\eik\eki^{-1}\\
&=&\eji\underset{Z}{\underline{\eij^{-1}\ekj^{-1}\eik}}\;\underset{Z}{\underline{\ejk\eki^{-1}\eji}}\eij^{-1}\ekj^{-1}\ejk\eik\eki^{-1}\\
&=&\eji\eik\underset{Z}{\underline{\eij\ekj^{-1}\eki^{-1}}}\eji^{-1}\ejk\eij^{-1}\ekj^{-1}\ejk\eik\eki^{-1}\\
&=&\eji\eik\eki^{-1}\ekj\eij\eji^{-1}\ejk\eij^{-1}\ekj^{-1}\ejk\eik\eki^{-1}\\
&\equiv&\ekj\eij\eji^{-1}\ejk\underset{Y}{\underline{\eij^{-1}\ekj^{-1}}}\ejk\eik\underset{Y}{\underline{\eki^{-1}\eji}}\eik\eki^{-1}\\
&=&\ekj\eij\eji^{-1}\ejk\ekj^{-1}\eij^{-1}\underset{Z}{\underline{\ejk\eik\eji}}\eki^{-1}\eik\eki^{-1}\\
&=&(\ejk\eik\eki^{-1}\underset{\textrm{(a) of (7.3)}}{\underline{\eki\eik^{-1}\ejk^{-1})\ekj\eij\eji^{-1}}}\ejk\ekj^{-1}\eij^{-1}\eji\ejk^{-1}\eik\eki^{-1}\eik\eki^{-1}\\
&=&\ejk\eik\eki^{-1}\eji\eij^{-1}\ekj^{-1}\ejk\eik\eki^{-1}\ejk\ekj^{-1}\eij^{-1}\eji\ejk^{-1}\eik\eki^{-1}\eik\eki^{-1}\\
&\equiv&\ekj^{-1}\cdot\ejk\eik\eki^{-1}\eji\eij^{-1}\ekj^{-1}\ejk\eik\eki^{-1}(\eji\eij^{-1}\eij\eji^{-1})\\
& &\cdot\ejk\ekj^{-1}\eij^{-1}\eji\ejk^{-1}\eik\eki^{-1}\eik\eki^{-1}\cdot\ekj\\
&=&\underset{\textrm{(a) of (7.3)}}{\underline{(\ekj^{-1}\ejk\eik\eki^{-1}\eji\eij^{-1})^2}}\eij\eji^{-1}\ejk\ekj^{-1}\eij^{-1}\eji\ejk^{-1}\eik\eki^{-1}\eik\eki^{-1}\ekj\\
&=&\eij\eji^{-1}\ejk\ekj^{-1}\eij^{-1}\eji\ejk^{-1}\eik\eki^{-1}\eik\eki^{-1}\ekj\\
&\equiv&\ffk\cdot\ekj\eij\eji^{-1}\ejk\ekj^{-1}\eij^{-1}\eji\ejk^{-1}\eik\eki^{-1}\eik\eki^{-1}\cdot\ffk\\
&\underset{X}{=}&\ekj^{-1}\eij\eji^{-1}\ejk^{-1}\ekj\eij^{-1}\eji\ejk\eik^{-1}\eki\eik^{-1}\eki.
\end{eqnarray*}
By Lemma~\ref{7.3}, we obtain the claim.
\end{proof}

\begin{prop}\label{456}
Each of relators {\rm (4.3), (5.3)} and {\rm (6.3)} is obtained from
 other relators and conjugation.
\end{prop}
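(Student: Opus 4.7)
The plan is to reduce each of relators (4.3), (5.3) and (6.3) to a purely Steinberg-type identity (one involving only the $E_{ij}$ and no $F_i$), and then to derive those identities from the hexagonal relator of (a) of (7.3) together with the relators of classes $Y$ and $Z$. For the reduction step, using only the $X$-relators $F_i^2 = (E_{ij}F_i)^2 = (E_{ij}F_j)^2 = (F_iF_j)^2 = [E_{ij},F_k] = 1$, one verifies
\[
(E_{ji}F_j E_{ij}F_i)^2 = (E_{ji}E_{ij}^{-1})^2
\]
for every $i \neq j$, by pushing each $F_l$ through the adjacent $E_{pq}$ via $F_jE_{ij}=E_{ij}^{-1}F_j$, $F_iE_{ij}F_i=E_{ij}^{-1}$, $F_iF_j=F_jF_i$, and closing with $F_jE_{ji}^{-1}F_j=E_{ji}$. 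Substituting these into (4.3), (5.3) and (6.3) replaces them with the purely Steinberg identities
\begin{align*}
[E_{31}^{-1}E_{32},\,E_{13}E_{23}]\,(E_{21}E_{12}^{-1})^2 &= 1, \\
[E_{21}^{-1}E_{23},\,E_{12}E_{32}]\,(E_{31}E_{13}^{-1})^2 &= 1, \\
[E_{12}^{-1}E_{13},\,E_{21}E_{31}]\,(E_{32}E_{23}^{-1})^2 &= 1.
\end{align*}

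To derive each of these three identities, I would start from the hexagonal identity $w^2=1$ with $w=E_{21}E_{12}^{-1}E_{32}^{-1}E_{23}E_{13}E_{31}^{-1}$, supplied by Lemma~\ref{7.3}(1) (together with its $(j,k)=(3,2)$ analogue). The aim is to rewrite $w^2$, modulo $Y$ and $Z$, in the factored form
\[
w^2 = (E_{21}E_{12}^{-1})^2 \cdot [E_{31}^{-1}E_{32},\,E_{13}E_{23}],
\]
so that the hexagonal relation converts directly into the first Steinberg identity above. The rearrangement proceeds by expanding $w^2$ and repeatedly applying the $Y$-commutations $[E_{jk},E_{ik}]=[E_{ij},E_{kj}]=1$ to move commuting generators past one another, together with the $Z$-identities $[E_{pq},E_{rp}]=E_{rq}^{-2}$ to interchange the remaining adjacent pairs, the latter at the cost of an $E_{rs}^{\pm 2}$ correction each time. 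The order of swaps is chosen so that the corrections either cancel in pairs or recombine into the right-hand side. The second and third Steinberg identities then require two further derivations of the same style: each begins by first showing that the cyclic shift of $w^2=1$ (such as $(E_{32}E_{23}^{-1}E_{13}^{-1}E_{31}E_{21}E_{12}^{-1})^2=1$) is itself a consequence of the $i=1$ hexagonal relation plus $Y$ and $Z$, and then applies the analogue of the first rearrangement; the mirror symmetry between the $(j,k)=(2,3)$ and $(3,2)$ hexagonal forms in Lemma~\ref{7.3} is used to cover both orientations.

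The main obstacle is the bookkeeping in these rearrangements: each $Z$-swap of adjacent non-commuting $E_{pq}$'s produces an $E_{rs}^{\pm 2}$ correction, and one must carefully orchestrate the swaps so that every such correction is either annihilated by a matching inverse correction or absorbed into the target commutator. Since $w$ has six letters and $w^2$ therefore twelve, while the target product $(E_{21}E_{12}^{-1})^2[E_{31}^{-1}E_{32},E_{13}E_{23}]$ also has twelve letters, no letters can be discarded, and the entire rearrangement must be essentially tight. This is structurally the same style of calculation already carried out in Lemma~\ref{7.3} and Proposition~\ref{7.3(b)(c)}, so the complete proof will amount to a longer elaboration of those manipulations.
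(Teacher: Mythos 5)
Your opening reduction is correct and coincides with what the paper does: modulo the $X$-relators one has $(E_{ji}F_jE_{ij}F_i)^2=(E_{ji}E_{ij}^{-1})^2$, so each of (4.3), (5.3), (6.3) becomes a relator in the $E_{ij}$ alone, e.g.\ $[E_{31}^{-1}E_{32},E_{13}E_{23}](E_{21}E_{12}^{-1})^2$. The gap is everything after that. The entire content of the proposition is the explicit derivation of these three words from the remaining relators, and you do not carry it out: the claimed factorization $w^2\equiv(E_{21}E_{12}^{-1})^2\,[E_{31}^{-1}E_{32},E_{13}E_{23}]$ modulo $Y$ and $Z$, the claim that the cyclically shifted hexagons are consequences of the index-$1$ hexagon modulo $Y$ and $Z$, and the analogous factorizations for (5.3) and (6.3) are all asserted with only a description of how one \emph{would} orchestrate the $Y$- and $Z$-swaps. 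For a statement of the form ``relator $R$ lies in the normal closure of the other relators,'' that orchestration \emph{is} the proof; saying the $E_{rs}^{\pm2}$ corrections ``either cancel in pairs or recombine into the right-hand side'' without exhibiting the sequence of moves establishes nothing.

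Two concrete warnings about the plan itself. First, you restrict the final step to (a) of (7.3), $Y$ and $Z$, but the paper's own derivation of the closely related relators (b), (c) of (7.3) from the hexagon (Proposition~\ref{7.3(b)(c)}) ends with a conjugation by $F_3$ and an application of the $X$-relators; this suggests that modulo $Y\cup Z$ alone the squared hexagon may only reach a word differing from your target by such an $F$-twist, so the factorization as you state it is at risk and you should expect to need $X$ there as well (harmless, since $X$ is available, but it changes the claim you must verify). Second, the paper does not need your intermediate step of re-deriving permuted hexagons at all: it reduces (4.3) and (5.3) in a few lines (using $X$, $Y$ and conjugation), and (6.3) by a longer $Y,Z$-computation, directly to the twelve-letter word of Lemma~\ref{7.3}(2), i.e.\ to a word equivalent to the relators (b), (c) of (7.3) --- which are themselves among the ``other relators.'' Redirecting your computation toward that word, rather than toward a fresh factorization of $w^2$, would let you quote Proposition~\ref{7.3(b)(c)} once instead of reproving a variant of it three times.
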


\begin{proof}
We first consider relators (4.3) and (5.3).
Let $(j,k)=(2,3)$ or $(3,2)$.
\begin{eqnarray*}
& &[\eji^{-1}\ejk,\eij\ekj](\eki\ffk\eik\ffi)^2\\
&=&\ejk^{-1}\eji\cdot\underset{Y}{\underline{\ekj^{-1}\eij^{-1}}}\cdot\underset{Y}{\underline{\eji^{-1}\ejk}}\cdot\eij\ekj\cdot
   \eki\ffk\eik\ffi\cdot\eki\ffk\eik\ffi\\
&\underset{X}{=}&\ejk^{-1}\eji\eij^{-1}\ekj^{-1}\ejk\eji^{-1}\eij\ekj\eki\eik^{-1}\eki\eik^{-1}\\
&\equiv&\ffi(\eki\eik^{-1}\eki\eik^{-1}\ejk^{-1}\eji\eij^{-1}\ekj^{-1}\ejk\eji^{-1}\eij\ekj)\ffi\\
&\underset{X}{=}&\eki^{-1}\eik\eki^{-1}\eik\ejk^{-1}\eji^{-1}\eij\ekj^{-1}\ejk\eji\eij^{-1}\ekj\\
&=&(\ekj^{-1}\eij\eji^{-1}\ejk^{-1}\ekj\eij^{-1}\eji\ejk\eik^{-1}\eki\eik^{-1}\eki)^{-1}.
\end{eqnarray*}

We next consider the relator (6.3).
\begin{eqnarray*}
& &[\xij^{-1}\xik,\xji\xki](\xkj\xxk\xjk\xxj)^2\\
&=&\xik^{-1}\xij\cdot\xki^{-1}\xji^{-1}\cdot\xij^{-1}\xik\cdot\underset{Z}{\underline{\xji\xki\cdot
   \xkj}}\xxk\xjk\xxj\cdot\xkj\xxk\xjk\xxj\\
&\underset{X}{=}&\xik^{-1}\xij\xki^{-1}\xji^{-1}\xij^{-1}\xik\xki^{-1}\xkj\underset{Y}{\underline{\xji\xjk^{-1}}}\xkj\xjk^{-1}\\
&\equiv&\underset{Z}{\underline{\xjk^{-1}\xik^{-1}\xij}}\xki^{-1}\xji^{-1}\xij^{-1}\xik\xki^{-1}\xkj\xjk^{-1}\xji\xkj\\
&=&\xik\xij\underset{Z}{\underline{\xjk^{-1}\xki^{-1}\xji^{-1}}}\xij^{-1}\xik\xki^{-1}\xkj\xjk^{-1}\xji\xkj\\
&=&\xik\xij\xki^{-1}\xji\underset{Z}{\underline{\xjk^{-1}\xij^{-1}\xik}}\xki^{-1}\xkj\xjk^{-1}\xji\xkj\\
&=&\xik\xij\xki^{-1}\xji\xij^{-1}\xjk^{-1}\xik^{-1}\xki^{-1}\xkj\xjk^{-1}\xji\xkj\\
&\equiv&\underset{Z}{\underline{\xij^{-1}\xjk^{-1}\xik^{-1}}}\xki^{-1}\xkj\xjk^{-1}\xji\underset{Z}{\underline{\xkj\xik\xij}}\xki^{-1}\xji\\
&=&\xjk^{-1}\xik\underset{Z}{\underline{\xij^{-1}\xki^{-1}\xkj}}\;\underset{Z}{\underline{\xjk^{-1}\xji\xik}}\;\underset{Z}{\underline{\xkj\xij^{-1}\xki^{-1}}}\xji\\
&=&\xjk^{-1}\xik\xki^{-1}\xkj^{-1}\underset{Z}{\underline{\xij^{-1}\xjk\xik}}\;\underset{Z}{\underline{\xji\xkj^{-1}\xki^{-1}}}\xij^{-1}\xji\\
&=&\xjk^{-1}\xik\xki^{-1}\xkj^{-1}\xjk\xik^{-1}\underset{Z}{\underline{\xij^{-1}\xki\xkj^{-1}}}\xji\xij^{-1}\xji\\
&=&\xjk^{-1}\xik\xki^{-1}\xkj^{-1}\xjk\xik^{-1}\xki\xkj\xij^{-1}\xji\xij^{-1}\xji.
\end{eqnarray*}
By Lemma~\ref{7.3}, each of relators (4.3), (5.3) and (6.3) is obtained
 from relators (1.-), (2.-), (3.-) and (b), (c) of (7.3).
Thus, we obtain the claim.
\end{proof}

\section*{Acknowledgement}
The author would like to express his thanks to Dan Margalit, Andrew
Putman and Neil Fullarton for informing the author about their results
including their finite presentations of $\g(n)$, Susumu Hirose and
Masatoshi Sato for their valuable suggestions and useful comments.



\end{document}